
   \documentclass[twoside,reqno,11pt]{fcaa-var} %


\usepackage{graphicx}
\usepackage{epsfig}
\usepackage{amsthm}
\usepackage{amsmath}
\usepackage{latexsym}
\usepackage{amsfonts}
\usepackage{amssymb}

 \textwidth  12.5cm \textheight 19cm
 \topmargin 0in
 \evensidemargin 1.3cm \oddsidemargin 1.3cm

 \hoffset 0.71cm \voffset 2.10cm
 \baselineskip=18pt
 \parindent=18pt

\newtheoremstyle{theorem}
  {15pt}          
  {15pt}  
  {\sl}  
  {\parindent}
  {\sc}  
  {. }   
  { }    
  {}     
\theoremstyle{theorem}
\newtheorem{lemma}{Lemma}[section]
\newtheorem{theorem}{Theorem}[section]

\newtheoremstyle{defi}
  {15pt}          
  {15pt}  
  {\rm}  
  {\parindent}     
  {\sc}  
  {. }    
  { }    
  {}     
\theoremstyle{defi}
\newtheorem{definition}{Definition}[section]
\newtheorem{remark}{Remark}[section]
\newtheorem{example}{Example}[section]

 
 \def\proofend{\hfill$\Box$}


\usepackage{epstopdf}
\usepackage{graphicx}
\usepackage{subfigure}
\usepackage{multirow}
 \usepackage{hyperref} 
\usepackage{extarrows}

 

  \setcounter{page}{1}
  \thispagestyle{empty}


 \title[Fractional asymptotical regularization]{On fractional asymptotical regularization of linear ill-posed problems in Hilbert spaces}
 \author[\normalsize Y. Zhang, B. Hofmann]{\normalsize Ye Zhang $^1$, Bernd Hofmann $^2$}


\newtheorem{proposition}{Proposition}

 \begin{document}

 \vbox to 2.5cm { \vfill }


 \bigskip \medskip

 \begin{abstract}

In this paper, we study a fractional-order variant of the asymptotical regularization method, called {\it Fractional Asymptotical Regularization (FAR)}, for solving linear ill-posed operator equations in a Hilbert space setting. We assign the method to the general linear regularization schema and prove that under certain smoothness assumptions, FAR with fractional order in the range $(1,2)$ yields an acceleration with respect to comparable order optimal regularization methods. Based on the one-step Adams-Moulton method, a novel iterative regularization scheme is developed for the numerical realization of FAR. Two numerical examples are given to show the accuracy and the acceleration effect of FAR.

 \medskip

{\it MSC 2010\/}: Primary 47A52;
                  Secondary  26A33, 65J20, 65F22, 65R30,

 \smallskip

{\it Key Words and Phrases}: Linear ill-posed operator equation, asymptotical regularization, fractional derivatives, stopping rules, source conditions, convergence rates, acceleration

 \end{abstract}

 \maketitle

 \vspace*{-16pt}


\section{Introduction}\label{Sec:1}

\setcounter{section}{1}
\setcounter{equation}{0}\setcounter{theorem}{0}

It belongs to the main areas of competence of the journal ``Fractional Calculus \& Applied Analysis'' to publish papers that handle initial-boundary value problems and abstract initial-value problems in Hilbert spaces for time-fractional diffusion and wave equations, partially also with respect to inverse and ill-posed problems. In this context, we refer for example to the papers \cite{KianYamamoto2017,LiLuchkoYamamoto2014,LiuRundellYamamoto2016,LuchkoYamamoto2016} by {\sc M.~Yamamoto, Y.~Luchko}, and coauthors. The behavior of solutions to such equations plays an important role for finding stable approximations to inverse problems aimed at characterizing parameter functions of the underlying physical processes. As the papers mentioned above show, error estimates and explicit representations of solutions to the fractional equations can also be used to generate new continuous regularization methods. In the present paper, we try to complement this research direction by extending the well-known method of asymptotical
regularization (Showalter's method) in Hilbert spaces to a fractional version using the left-side Caputo fractional derivative. In particular, we are going to derive regularization properties by a stringent analysis and to show
at least by means of case studies an acceleration effect based on the fractional derivatives.

Let $A$ be a compact linear operator acting between two infinite dimensional Hilbert spaces $\mathcal{X}$ and $\mathcal{Y}$ such that the range $\mathcal{R}(A)$ of $A$ is an infinite dimensional subspace of  $\mathcal{Y}$. It is
well-known that then $\mathcal{R}(A)$ is non-closed, i.e.~$\overline{\mathcal{R}(A)}^{\mathcal{Y}}\not=\mathcal{R}(A)$, and the linear operator equation
\begin{equation}\label{OperatorEq}
Ax=y,
\end{equation}
is {\it ill-posed} and requires some kind of {\it regularization}. This is typical for operator equations (\ref{OperatorEq}) which are models for {\it linear inverse problems} (cf., e.g.,~\cite{Groetsch2007} or \cite[\S~2]{Hansen2010} and references therein).
For simplicity, we denote by $\langle \cdot, \cdot \rangle$ and $\|\cdot\|$ in the sequel the inner products and norms for both Hilbert spaces $\mathcal{X}$ and $\mathcal{Y}$.
In this paper, we are interested in stable approaches for solving (\ref{OperatorEq}) that can be realized by fast iterative algorithms. The corresponding stable approximate solutions should be based on noisy data $y^\delta$ of the exact right-hand side $y=Ax^\dagger$, where $x^\dagger=A^\dagger y$ denotes the minimum-norm solution of (\ref{OperatorEq}). In this context, we consider the noise model $\|y^\delta - y\|\leq \delta$ with noise level $\delta>0$.

The simplest iterative regularization approach for solving \eqref{OperatorEq} seems to be the Landweber method, which is given by the iteration procedure
\begin{equation}\label{Landweber}
x^\delta_{k+1} = x^\delta_{k} + \Delta t A^* ( y^\delta- A x^\delta_{k} ), \quad \Delta t\in(0, 2/\|A\|^2) \quad (k=0,1,2...)
\end{equation}
with some starting element $x_0 \in\mathcal{X}$, where $A^*$ denotes the adjoint operator of $A$. The continuous analog to (\ref{Landweber}) as $\Delta t$ tends to zero is known as {\it asymptotic regularization} or {\it  } (see, e.g.,~\cite{Tautenhahn-1994,Vainikko1986}). It is written as a first order evolution equation of the form
\begin{eqnarray}\label{FisrtFlow}
\dot{x}^\delta(t) + A^* A x^\delta(t)= A^* y^\delta
\end{eqnarray}
with some initial condition, where an artificial scalar time $t$ is introduced. There must be chosen an appropriate finite {\it stopping time} $T_*=T_*(\delta)$ (a priori choice) or $T_*=T_*(\delta,y^\delta)$ (a posteriori choice)
in order to ensure the regularizing property $x^\delta(T_*) \to x^\dagger$ as $\delta\to0$.

For the asymptotic regularization it is well-known that for all $p>0$ under H\"{o}lder-type source conditions
\begin{equation}\label{HolderSourceIntro}
x^\dagger = (A^*A)^p v, \quad \|v\| \le \rho,
\end{equation}
and for the stopping time $T_*$ selected according to the a priori choice $T_*=T_*(\delta) \sim \delta^{-\frac{2}{2p+1}}$  we have for all $p>0$ (cf., e.g.,~\cite[Theorem 2]{Tautenhahn-1994}) the order optimal convergence rate
\begin{equation}\label{Convergence_Showalter}
\|x^\delta (T_*) - x^\dagger \| = \mathcal{O}(\delta^{\frac{2p}{2p+1}}) \quad  \mbox{as} \quad \delta \to 0.
\end{equation}
From the analog result of the Landweber iteration in \cite[Theorem 6.5]{engl1996regularization} it can be concluded for asymptotical regularization that we have for the stopping time $T_*=T_*(\delta,y^\delta)$ chosen according to Morozov's discrepancy principle the formulas
 \begin{equation}\label{Choice_Showalter}
 \|x^\delta (T_*) - x^\dagger \| = \mathcal{O}(\delta^{\frac{2p}{2p+1}}) \quad \mbox{and} \quad T_* = \mathcal{O}(\delta^{-\frac{2}{2p+1}}) \quad  \mbox{as} \quad \delta \to 0.
\end{equation}

Moreover, it has been shown that by using Runge-Kutta integrators, all of the properties of asymptotic regularization (\ref{FisrtFlow}) carry over to its numerical realization~\cite{Rieder-2005}. Hence, the continuous model (\ref{FisrtFlow}) is of particular importance for studying the intrinsic properties of a broad class of general linear regularization methods for inverse problems, and can be used for the development of new iterative regularization algorithms by combining some appropriate numerical schemes.

A fatal defect for large-scale problems is the slow performance of Landweber iteration (too many iterations required for optimal stopping)  as well as of the asymptotical regularization method, i.e.~too excessive stopping times $T_*$ are required for obtaining optimal convergence rates \eqref{Convergence_Showalter}. Therefore, in practice, accelerating strategies are usually used. The well-known methods are semi-iterative methods (e.g. the Brakhage's $\nu$-method) and the Nesterov acceleration scheme. It has been proven that
\begin{itemize}
\item for the $\nu$-method, the optimal convergence rates can be obtained with approximately the square root of iterations than those needed for ordinary Landweber iteration \cite[\S~6.3]{engl1996regularization}. However, in contrast to the Landweber iteration, the $\nu$-methods show a saturation phenomenon; i.e., the optimal convergence rate \eqref{Convergence_Showalter} and the asymptotic $k^* = \mathcal{O}(\delta^{-\frac{1}{2p+1}})$ hold only for $p\leq\nu$ and $p\leq\nu-0.5$, respectively.
\item For the Nesterov acceleration scheme, the optimal convergence rates are obtained if $p\leq1/2$ and if the iteration is terminated according to an a priori stopping rule. If $p>1/2$ or if the iteration is terminated according to the Morozov's conventional discrepancy principle, only sub-optimal convergence rates can be guaranteed. In both cases, sub-optimal convergence rates can be obtained with the same acceleration speed as by the $\nu$-method \cite{Neubauer-2017}.
\end{itemize}

Recently, inspired by these two accelerated regularization methods and the advantage of the continuous model, the authors extended \eqref{FisrtFlow} in \cite{ZhaHof2018AA} to the damped second order dynamics of the form
\begin{equation}\label{SecondFlow}
\ddot{x}^\delta(t) + \eta \dot{x}^\delta(t) + A^* A x^\delta(t)= A^* y^\delta, \quad \eta>0.
\end{equation}
It has been shown that, under condition \eqref{HolderSourceIntro}, \eqref{SecondFlow} exhibits the same convergence rate \eqref{Convergence_Showalter} as the asymptotic regularization method for all smoothness index $p>0$. By using the total energy discrepancy principle for choosing the terminating time and the damped symplectic integrators, a new iterative regularization algorithm has been proposed in this regard. Even though numerical experiments demonstrate the acceleration effect of the method \eqref{SecondFlow}, rigorous mathematical proof is still lacking.

In this paper, we replace the first derivative in the dynamical model \eqref{FisrtFlow} with appropriate fractional derivatives. Specifically, we consider in Hilbert spaces the following initial value problem
\begin{equation}\label{FractionalFlow}
\left( ^C D^\theta_{0+} x^\delta \right) (t) + A^*A x^\delta(t) = A^* y^\delta, \quad D^{k} x^\delta (0) =b_k,~k=0, ..., n-1,
\end{equation}
to an evolution equation of fractional order $\theta$,
where $\theta\in(0,2), n=\lfloor\theta\rfloor+1$, and $\lfloor\cdot\rfloor$ is the floor function. $D^{k}$ denotes the usual differential operator of order $k$. The left-side Caputo fractional derivative is defined by $\left( ^C D^\theta_{0+} x \right) (t) := I^{n-\theta}_{0+} D^n x(t)$, where $I^{n-\theta}_{0+}$ is the left-side Riemann-Liouville integral operator, i.e., $(I^{n-\theta}_{0+} x)(t):= \frac{1}{\Gamma(\theta)} \int^t_0 \frac{x(t)}{(t-\tau)^{1-n+\theta}} d\tau$ with the gamma function $\Gamma(\cdot)$. Note that, for $\theta=1$, \eqref{FractionalFlow} coincides with Showalter's method (asymptotical regularization). We will call the fractional dynamics \eqref{FractionalFlow} with an appropriate choice of terminating time {\it Fractional Asymptotical Regularization (FAR)}. The main goal of this paper is to show that under smoothness assumptions imposed on the exact solution, FAR with $\theta\in(1,2)$ yields an accelerated optimal regularization method.

The paper is structured as follows: in Section \ref{sec:2}, we review basic concepts in the general linear regularization theory so that they can be applied to the convergence analysis of FAR. The regularization properties of FAR under H\"{o}lder-type and logarithmic source conditions are studied in Sections \ref{sec:3} and \ref{logarithmic}, respectively. Section \ref{NumericalScheme} is devoted to the numerical realization of FAR. Finally, concluding remarks are given in Section \ref{Conclusion}.

\section{Linear regularization methods revisited}\label{sec:2}

\setcounter{section}{2}
\setcounter{equation}{0}\setcounter{theorem}{0}


We start with some definitions, taken from \cite{Mathe-2003} and \cite{Hofmann-2007}, respectively.

 \begin{definition}\label{Index}
A function $\varphi: \mathbb{R}_+ \to \mathbb{R}_+$ is called an index function if it is continuous, strictly increasing and satisfies $\lim_{\lambda\to 0+} \varphi(\lambda)=0$.
 \end{definition}

 \begin{definition}\label{gAlpha}
A family of functions $g_\alpha (\lambda)\;(0<\lambda\le \|A\|^2)$, defined for regularization parameters $0<\alpha \le \bar \alpha$, is called a generator function for a linear regularization method
to the ill-posed linear operator equation \eqref{OperatorEq} if the following three conditions are fulfilled:
\begin{itemize}
\item[(i)]  For the bias function $r_\alpha(\lambda) = 1 - \lambda g_\alpha(\lambda)$ we have for any fixed $\lambda\in (0,\|A\|^2]$ the limit condition $\lim_{\alpha\to0} |r_{\alpha}(\lambda)| =0$.
 \item[(ii)] There exists a constant $\gamma_1 >0 $ such that $|r_{\alpha}(\lambda)|\leq \gamma_1$ for all $\lambda\in(0,\|A\|^2]$ and for all $\alpha\in(0,\bar{\alpha}]$.
\item[(iii)] There exists a constant $\gamma_*>0$ such that $\sqrt{\lambda} |g_{\alpha}(\lambda)| \leq \gamma_*/\sqrt{\alpha}$ for all $\alpha\in(0,\bar{\alpha}]$.
\end{itemize}
 \end{definition}

Once a generator function family $g_\alpha (\lambda)$ is chosen, the approximate solutions to \eqref{OperatorEq} based on the noisy data $y^\delta$ are calculated by the procedure
\begin{equation}\label{regularization}
x^\delta_{\alpha} = g_\alpha(A^* A)A^* y^\delta,
\end{equation}
where
\begin{equation}\label{NoiseFreeSolu}
x_{\alpha} = g_\alpha(A^* A)A^* A x^\dagger,
\end{equation}
characterizes the noise-free analog to $x_\alpha^\delta$. By condition (iii) of Definition~\ref{gAlpha} and exploiting the triangle inequality, we obtain the well-known error estimates
\begin{equation}\label{errorBounds2}
\begin{array}{ll}
\|x^\delta_{\alpha}-x^\dagger\| & \leq  \|x_{\alpha}-x^\dagger\| + \|x^\delta_{\alpha}-x_{\alpha}\|
\\ &
= \|r_\alpha(A^* A) x^\dagger\| + \delta \|g_\alpha(A^* A)A^*\| \\ &
\leq \|r_\alpha(A^* A) x^\dagger\| + \gamma_* \delta / \sqrt{\alpha}.
\end{array}
\end{equation}
Furthermore, from properties (i) and (ii) of Definition \ref{gAlpha} we deduce for $\alpha\to0$ point-wise convergence $r_\alpha(A^* A)x\to0$ for any $x\in \mathcal{X}$ (see, e.g.,~\cite[Theorem 4.1]{engl1996regularization}). If the regularization parameters $\alpha=\alpha(\delta)$ or $\alpha=\alpha(\delta,y^\delta)$ are chosen such that
$$\lim_{\delta\to 0} \alpha =  \lim_{\delta\to 0} \delta^2 / \alpha = 0,$$
one can derive convergence $\|x^\delta_{\alpha}-x^\dagger\|\to 0$ as $\delta\to0$
by the estimate \eqref{errorBounds2}.

If we have an index function $f$ depending on $x^\dagger$ as upper bound of the form
\begin{equation} \label{eq:profile}
\|r_\alpha(A^* A) x^\dagger\|\leq f(\alpha) \quad (0<\alpha \le \bar \alpha)
\end{equation}
then this function also determines the error profile for the solution $x^\dagger$ in the noisy case, and was therefore termed `\emph{profile function}' in \cite{Hofmann-2007}. By
using the auxiliary index function $\Theta(\alpha) := \sqrt{\alpha} f(\alpha)$ and choosing the regularization parameter a priori as $\alpha_* = \Theta^{-1}(\delta)$, we can derive under additional
conditions the convergence rate
\begin{equation}\label{Estimator3}
\|x^\delta_{\alpha_*}-x^\dagger\| \leq (1 + \gamma_*) f(\Theta^{-1}(\delta))=\mathcal{O}\left(f(\Theta^{-1}(\delta))\right)  \quad \mbox{as} \quad \delta \to 0.
\end{equation}

Note that the three requirements in Definition \ref{gAlpha} are not sufficient to yield profile functions $f$ for classes of solutions $x^\dagger$. It is well-known that convergence rates for approximate solutions of form \eqref{regularization} are connected with smoothness conditions imposed on the solution $x^\dagger$ with respect to the forward operator $A$. In order to measure the sensitivity of a regularization method with respect to possible smoothness assumptions, the concept of qualification in the sense of index functions is introduced:
\begin{definition}
\label{QualificationDef}
 Let $\varphi$ be an index function. A regularization method \eqref{regularization} generated by $g_\alpha (\lambda)$ is said to have the qualification $\varphi$ if there are a constant $\gamma>0$, independent of $\alpha$,
 and a value $\bar\alpha>0$ such that for all $\alpha\in(0,\bar{\alpha}]$ the inequality
\begin{equation*}\label{QualificationFun}
\sup_{\lambda\in(0,\|A\|^2]} |r_{\alpha}(\lambda)| \varphi(\lambda) \leq \gamma \varphi(\alpha)
\end{equation*}
is satisfied.
\end{definition}
It should be noted that the concept of Definition~\ref{QualificationDef} is an alternative to the traditional concept of qualification in the sense of a positive number or infinity used in  \cite{Vainikko1986}
and \cite[Chap.~4]{engl1996regularization}. This alternative concept was originally introduced in \cite{Mathe-2003}, but it has been frequently used by other authors recently. Only if the index function $\varphi$ is a qualification
in the sense of Definition~\ref{QualificationDef} for the regularization method with generator functions $g_\alpha$, we can derive the rate result (\ref{Estimator3}) from (\ref{eq:profile}).

\begin{example}[H\"older source conditions]\label{ex:ex1}
In this example we consider the index functions $\varphi(\lambda)=\lambda^p$ associated with the H\"older source conditions (\ref{HolderSourceIntro}). If such $\varphi$ is for some $p>0$ a qualification for method generated by $g_\alpha$, then we obtain from (\ref{errorBounds2})
$$\|x^\delta_{\alpha}-x^\dagger\|\le \gamma \alpha^p \rho + \gamma_* \delta / \sqrt{\alpha} $$
and hence for $\alpha_*=\alpha(\delta)=c \delta^{2p/(2p+1)}\;(c>0)$ the error estimate and convergence rate
\begin{equation} \label{eq:estpower}
\|x_{\alpha_*}^\delta-x^\dagger\|\le \left(\gamma \,c^p+ \gamma_*/ \sqrt{c} \right) \delta^{\frac{2p}{2p+1}} = \mathcal{O}\left(\delta^\frac{2p}{2p+1}\right) \quad \mbox{as} \quad \delta \to 0.
\end{equation}
\end{example}

If a benchmark source condition $\psi$ is known to be a qualification for the method generated by $g_\alpha$, then other index functions $\varphi$ are also qualifications whenever they are {\it covered} by $\psi$,
and we refer to \cite[Def.~2]{Mathe-2003} and \cite[Prop.~3, Remark~5 and Lemma~2]{Mathe-2003} for the following definition and proposition, respectively.
\begin{definition}
\label{def:covered}
Let $\psi(\lambda)\;(0<\lambda\le \|A\|^2)$ be an index function. Then an index function $\varphi(\lambda)\;(0<\lambda\le \|A\|^2)$ is said to be covered by $\psi$ if there is $\underline c>0$ such that
 \begin{equation*}\label{QualificationCover}
\underline c\,\frac{\psi(\alpha)}{\varphi(\alpha)} \le \inf \limits _{\alpha \le \lambda \le \|A\|^2} \frac{\psi(\lambda)}{\varphi(\lambda)} \qquad (0 < \alpha \le \bar \alpha).
\end{equation*}
\end{definition}

\begin{proposition} \label{pro:covered}
The index function $\varphi$ is a qualification for the method generated by $g_\alpha$ if $\varphi$ is covered by $\psi$ and if $\psi$ is a qualification for that method. If the quotient function $\lambda \mapsto  \frac{\psi(\lambda)}{\varphi(\lambda)}$ is increasing for $0<\lambda \le \bar \lambda$ and some $\bar \lambda>0$, then $\varphi$ is covered by $\psi$. If, in particular, the index function $\varphi(\lambda)$ is concave for
$0<\lambda \le \bar \lambda$, then $\varphi$ is covered by $\psi(\lambda)=\lambda$.
\end{proposition}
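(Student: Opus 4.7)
My plan is to prove the three assertions in the order stated, reducing the last one to the second via a standard concavity argument.

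For the first assertion, I would exploit the factorization
\[
|r_\alpha(\lambda)|\,\varphi(\lambda) \;=\; |r_\alpha(\lambda)|\,\psi(\lambda)\cdot\frac{\varphi(\lambda)}{\psi(\lambda)},
\]
and split the supremum over $\lambda\in(0,\|A\|^2]$ defining qualification of $\varphi$ into the two regimes $\alpha\le\lambda\le\|A\|^2$ and $0<\lambda<\alpha$. On the upper regime the covering inequality of Definition \ref{def:covered} provides $\varphi(\lambda)/\psi(\lambda)\le\underline c^{-1}\varphi(\alpha)/\psi(\alpha)$, after which the qualification hypothesis $|r_\alpha(\lambda)|\psi(\lambda)\le\gamma\,\psi(\alpha)$ collapses the bound to $(\gamma/\underline c)\,\varphi(\alpha)$. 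On the lower regime the covering condition is not available, but strict monotonicity of the index function $\varphi$ yields $\varphi(\lambda)<\varphi(\alpha)$, while property (ii) of Definition \ref{gAlpha} gives $|r_\alpha(\lambda)|\le\gamma_1$, so the product is controlled by $\gamma_1\,\varphi(\alpha)$. Taking the maximum $\max(\gamma/\underline c,\gamma_1)$ as the new qualification constant finishes claim one.

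For the second assertion, assuming $\psi/\varphi$ is increasing on $(0,\bar\lambda]$, I would fix $\bar\alpha\in(0,\bar\lambda]$ and, for $\alpha\in(0,\bar\alpha]$, split
\[
\inf_{\alpha\le\lambda\le\|A\|^2}\frac{\psi(\lambda)}{\varphi(\lambda)} \;=\; \min\Bigl(\inf_{\alpha\le\lambda\le\bar\lambda}\tfrac{\psi(\lambda)}{\varphi(\lambda)},\ \inf_{\bar\lambda\le\lambda\le\|A\|^2}\tfrac{\psi(\lambda)}{\varphi(\lambda)}\Bigr).
\]
The assumed monotonicity identifies the first infimum with $\psi(\alpha)/\varphi(\alpha)$. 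The second infimum is the minimum of a positive continuous function on a compact interval, hence equals some $m>0$ independent of $\alpha$. Since $\psi(\alpha)/\varphi(\alpha)\le K:=\psi(\bar\lambda)/\varphi(\bar\lambda)$ by the same monotonicity, the choice $\underline c:=\min\{1,\,m/K\}$ forces $\underline c\,\psi(\alpha)/\varphi(\alpha)$ below both terms, yielding the covering inequality.

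The third assertion is an immediate corollary of the second with $\psi(\lambda)=\lambda$. Since $\varphi$ is an index function, $\lim_{\lambda\to 0+}\varphi(\lambda)=0$; combined with concavity on $(0,\bar\lambda]$, the standard chord comparison (writing $\lambda_1=\mu\lambda_2+(1-\mu)\varepsilon$ and letting $\varepsilon\to 0+$) shows that the secant slope $\varphi(\lambda)/\lambda$ is decreasing in $\lambda$, so its reciprocal $\lambda/\varphi(\lambda)=\psi(\lambda)/\varphi(\lambda)$ is increasing, and part two applies. I expect the main obstacle to be the compactness step in claim two: on $[\bar\lambda,\|A\|^2]$ no monotonicity of $\psi/\varphi$ is available and one must lean on continuity and strict positivity to extract the constant $m$; the remaining work is just bookkeeping with the constants $\gamma_1$, $\gamma/\underline c$, and $m/K$.
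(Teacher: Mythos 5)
Your proof is correct; the paper itself gives no proof of this proposition but cites Math\'e and Pereverzev (2003), and your argument is exactly the standard one from that source: split the supremum at $\lambda=\alpha$, using the covering inequality together with the qualification of $\psi$ above $\alpha$ and the uniform residual bound $\gamma_1$ below $\alpha$, then reduce the monotone-quotient and concavity statements to the covering condition. The only cosmetic point is that in the second part you implicitly assume $\bar\lambda\le\|A\|^2$ (otherwise simply replace $\bar\lambda$ by $\|A\|^2$), which does not affect validity.
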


\begin{example}[Logarithmic source conditions]\label{ex:ex2}
The focus of this example is on logarithmic source conditions
\begin{equation}\label{logarithmicCondition}
x^\dagger = \varphi_\mu(A^*A) v, \quad \|v\|\leq \rho,
\end{equation}
where for exponents $\mu>0$ the function $\varphi_\mu$ is defined as
\begin{equation}\label{logarithmicQualification}
\varphi_{\mu}(\lambda) = \left\{\begin{array}{ll}
\log^{-\mu}(1/\lambda) \qquad \textrm{~for} \quad 0< \lambda  \le e^{-\mu-1},  \\
\mbox{arbitrarily extended as index function for} \;\;\lambda > e^{-\mu-1}.
\end{array}\right.
\end{equation}
Note that the  condition \eqref{logarithmicCondition} for any $\mu>0$ is significantly weaker than the condition \eqref{HolderSourceIntro}, even for arbitrarily small $p>0$. As a consequence, the expected logarithmic convergence rates are also significantly lower than in the H\"older case \eqref{eq:estpower} and typically occur for severely ill-posed problems (cf., e.g.,~\cite{Hohage2000}).
If $\varphi_\mu$ is for some $\mu>0$ a qualification for method generated by $g_\alpha$, then we obtain from (\ref{errorBounds2})
$$\|x^\delta_{\alpha}-x^\dagger\|\le \gamma\, \varphi_\mu(\alpha)\, \rho + \gamma_* \delta / \sqrt{\alpha}, $$
for sufficiently small $\alpha>0$, and hence for $\alpha_*=\alpha(\delta)=c \,\delta^\kappa\;(c>0, \linebreak 0<\kappa<2)$ the convergence rate
\begin{equation} \label{eq:estlogarithmic}
\|x_{\alpha_*}^\delta-x^\dagger\| = \mathcal{O}\left(\varphi_\mu(\delta)\right) \quad \mbox{as} \quad \delta \to 0
\end{equation}
as a consequence of $\lim_{\delta \to 0} \delta^\eta/\varphi_\mu(\delta)=0$, which is valid for all $\eta>0$.
\end{example}

\section{FAR under H\"{o}lder-type source conditions}
\label{sec:3}

\setcounter{section}{3}
\setcounter{equation}{0}\setcounter{theorem}{0}

In this section, we show that the FAR method with fractional order $\theta>0$  introduced in the introduction can be assigned to the general linear regularization schema recalled in Section~\ref{sec:2}.
Therefore, we start with the verification of the generator function $g_\alpha$ occurring in formula \eqref{regularization} associated with the fractional differential equation \eqref{FractionalFlow} and the corresponding regularization properties. For simplicity, let $b_k=0$ in \eqref{FractionalFlow} for all $k$ under consideration. The case with non-vanishing initial data can be analyzed similarly, see \cite{ZhaHof2018AA}.

Let $\{\sigma_j; u_j, v_j\}_{j=1}^\infty$ be the well-defined singular system for the compact linear operator $A$, i.e.~we have $Au_j= \sigma_j v_j$ and $A^* v_j = \sigma_j u_j$ with ordered singular values $\|A\|=\sigma_1 \geq \sigma_2 \geq \cdot\cdot\cdot \geq \sigma_j \geq \sigma_{j+1} \geq \cdot\cdot\cdot \to 0$ as $j \to \infty$. Since the eigenelements $\{u_j\}_{j=1}^\infty$ and $\{v_j\}_{j=1}^\infty$ form complete orthonormal systems (with the exception of null-spaces) in  $\mathcal{X}$ and $\mathcal{Y}$,
respectively, \eqref{SecondFlow} is equivalent to
\begin{equation*}\label{SVDEq1}
\left\langle \left( ^C D^\theta_{0+} x^\delta \right) (t) , u_j  \right\rangle + \sigma^2_j \langle x^\delta(t) , u_j \rangle = \sigma_j \langle y^\delta , v_j \rangle, \quad j=1,2, ...\,.
\end{equation*}
Using the decomposition $x^\delta(t)=\sum \limits_{j=1}^\infty \xi_j(t) u_j$, we obtain
\begin{equation}\label{SVDEq2}
\left( ^C D^\theta_{0+} \xi_j \right) (t) + \sigma^2_j \xi_j(t) = \sigma_j \langle y^\delta , v_j \rangle, \quad j=1,2, ...\,.
\end{equation}

On the other hand, according to \cite[\S~4.1.3]{Kilbas2006}, the solution to the equation $\left( ^C D^\theta_{0+} \eta \right) (t) - \lambda \eta(t) = \kappa(t)$ with the vanishing initial data is explicitly given by (see formula (4.1.62) in \cite{Kilbas2006})
\begin{equation}\label{SVDSoluOld}
\eta(t) = \int^t_0 (t-\tau)^{\theta-1} E_{\theta,\theta}(\lambda(t-\tau)^\theta) \kappa(\tau) d \tau,
\end{equation}
where the two-parametric Mittag-Leffler function $E_{\theta_1,\theta_2}(z)$ is defined as $$E_{\theta_1,\theta_2}(z) = \sum^\infty_{k=0} \frac{z^k}{\Gamma(\theta_1 k + \theta_2)}.$$

Therefore, together with \eqref{SVDSoluOld}, we deduce that the solution of \eqref{SVDEq2} with the vanishing initial data is given by
\begin{equation*}\label{SolutionXi1}
\xi_j(t) = \sigma_j \langle y^\delta , v_j \rangle \int^t_0 (t-\tau)^{\theta-1} E_{\theta,\theta}(-\sigma^2_j (t-\tau)^\theta) d \tau,
\end{equation*}

Consequently, by the above result, together with the identity \cite[\S~4.4]{Gorenflo2014}
\begin{equation*}
\int^t_0 E_{\theta_1,\theta_2}(\xi z^{\theta_1}) z^{\theta_2 -1} dz = t^{\theta_2} E_{\theta_1,\theta_2+1}(\xi t^{\theta_1}), \quad \theta_2>0,
\end{equation*}
we deduce for all $\theta>0$ that $$\xi_j(t) = \sigma_j t^\theta E_{\theta,\theta+1}(-\sigma^2_j t^\theta) \langle y^\delta , v_j \rangle, \quad j=1,2, ...\,.$$
 Thus, by using $x^\delta(t)=\sum_{j=1}^\infty \xi_j(t) u_j$, we obtain the explicit formula
\begin{equation}\label{SolutionX}
x^\delta(t) = t^\theta \sum^\infty_{j=1}  E_{\theta,\theta+1}(-\sigma^2_j t^\theta) \sigma_j \langle y^\delta , v_j \rangle u_j = g^\theta(t, A^* A) A^* y^\delta
\end{equation}
for the solution to \eqref{SecondFlow}, where the generator function $g^\theta(t,\lambda)$ characterizing the FAR method attains the form
\begin{equation}\label{g}
g^\theta(t, \lambda) := t^\theta E_{\theta,\theta+1}(-\lambda t^\theta).
\end{equation}

Furthermore, by using the recurrence relations
\begin{equation*}
z^m E_{\theta_1,\theta_2+m\theta_1} (z) = E_{\theta_1,\theta_2} (z) - \sum^{m-1}_{k=0} \frac{z^k}{\Gamma(\theta_1 k + \theta_2)}
\end{equation*}
we obtain an associated bias function $r^\theta(t,\lambda)=1-\lambda g^\theta(t,\lambda)$ of the form
\begin{equation}\label{r}
r^\theta(t, \lambda) = 1- \lambda t^\theta E_{\theta,\theta+1}(-\lambda t^\theta) = E_{\theta,1}(-\lambda t^\theta)\equiv E_{\theta}(-\lambda t^\theta),
\end{equation}
where $E_{\theta}(z)= \sum^\infty_{k=0} \frac{z^k}{\Gamma(\theta k + 1)}$ denotes the classical Mittag-Leffler Function.

\begin{remark}\label{RekTheta}
For $\theta=0.5$, we have $r(t, \lambda) = e^{-\lambda^2 t} \left[ 1 + \textrm{erf}(-\lambda \sqrt{t}) \right]$, where $\textrm{erf}(z):= \frac{2}{\sqrt{\pi}} \int^z_0 e^{-\tau^2} d\tau$ represents the error function. Moreover, $r(t, \lambda) = e^{-\lambda t}$ for $\theta=1$, while $r(t, \lambda) = \cos(\sqrt{\lambda}t)$ when $\theta=2$.
\end{remark}


\begin{lemma}\label{MittagLefflerIneqAll}
Let $\theta\in(0,2)$ be a fixed number. Then we have for all $z\in \mathbb{R}_+$ the following three inequalities
\begin{eqnarray}
|E_{\theta,\theta+1}(-z)|  &\leq& C_\theta/(1+ z), \label{EIneqG} \\
|E_{\theta}(-z)|  &\leq& C_\theta/(1+ z),  \label{EIneqR} \\
|E_{\theta}(-z)|  &\leq& 3, \label{EIneqRBounded}
\end{eqnarray}
where $C_\theta>0$ is a constant, depending only on $\theta$.
\end{lemma}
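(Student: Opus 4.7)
The plan is to establish all three bounds via the classical machinery for the Mittag-Leffler function: the absolutely convergent power series gives control on bounded intervals, while Wiman's asymptotic expansion governs large arguments.

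First I would partition $[0,\infty)$ into $[0,R_\theta]$ and $(R_\theta,\infty)$ for a threshold $R_\theta$ depending only on $\theta$. On the compact piece $[0,R_\theta]$, the function $z\mapsto E_{\theta,\beta}(-z)$ (for $\beta=\theta+1$ and $\beta=1$) is entire in $z$, hence continuous, and the defining power series yields the pointwise bound $|E_{\theta,\beta}(-z)|\leq \sum_{k=0}^\infty R_\theta^k/\Gamma(\theta k+\beta)$. Since $1/(1+z)\geq 1/(1+R_\theta)$ on this interval, this finite number can be absorbed into the constant $C_\theta$.

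For $z>R_\theta$, because $\theta\in(0,2)$ the angle $|\arg(-z)|=\pi$ strictly exceeds $\theta\pi/2$, so Wiman's asymptotic expansion (cf.~\cite[Ch.~4]{Kilbas2006}) applies in the form
\[
E_{\theta,\beta}(-z)=-\sum_{k=1}^{N}\frac{(-z)^{-k}}{\Gamma(\beta-\theta k)}+O(|z|^{-N-1}),\qquad z\to\infty.
\]
Taking $N=1$ and $\beta\in\{\theta+1,1\}$ gives $|E_{\theta,\beta}(-z)|\leq C'_\theta/z\leq 2C'_\theta/(1+z)$; the single exceptional pair $\theta=1,\beta=1$ is handled directly via $E_1(-z)=e^{-z}$. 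Combining the two regimes yields \eqref{EIneqG} and \eqref{EIneqR}.

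For \eqref{EIneqRBounded} I would split according to whether $\theta\in(0,1]$ or $\theta\in(1,2)$. In the first case Pollard's complete-monotonicity theorem guarantees $0\leq E_\theta(-z)\leq 1$, so the bound holds trivially with $1<3$. In the second case I would exploit the identity $E_\theta(-z)=1-zE_{\theta,\theta+1}(-z)$ already derived in \eqref{r}; combined with \eqref{EIneqG} this gives $|E_\theta(-z)|\leq 1+zC_\theta/(1+z)\leq 1+C_\theta$, so it suffices to control $C_\theta$. The main obstacle, in my view, is precisely the uniform numerical constant $3$ in \eqref{EIneqRBounded}: the chain above only delivers $1+C_\theta$, and a priori $C_\theta$ could blow up as $\theta$ approaches the endpoints of $(1,2)$. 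Handling this cleanly relies on the fact that the leading Wiman coefficient $1/\Gamma(1-\theta)$ depends continuously on $\theta$ and degenerates at the endpoints to the manifestly bounded cases $E_1(-z)=e^{-z}$ and $E_2(-z)=\cos\sqrt{z}$, so a uniform numerical bound can be assembled by choosing $R_\theta$ carefully and combining the compact-interval estimate with the asymptotic remainder. The remaining steps are routine algebra with the series and with the Wiman remainder term.
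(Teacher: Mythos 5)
Your treatment of \eqref{EIneqG} and \eqref{EIneqR} is essentially sound (the paper simply cites \cite[Theorem 1.6]{Podlubny1999} and \cite[Corollary 3.7]{Gorenflo2014}, which package exactly the series-plus-Wiman computation you sketch), and for \eqref{EIneqRBounded} with $\theta\in(0,1]$ your complete-monotonicity argument delivers the bound $1$, just as the paper's use of Simon's two-sided estimate does. The genuine gap is the case $\theta\in(1,2)$ of \eqref{EIneqRBounded}. Your chain $|E_\theta(-z)|\le 1+zC_\theta/(1+z)\le 1+C_\theta$ does not give $3$ even for a single fixed $\theta$, since $C_\theta$ is an unspecified (and, as the paper remarks, divergent as $\theta\nearrow2$) constant, and the repair you sketch cannot close this: on the compact piece $[0,R_\theta]$ you bound the series by its absolute values, which grows roughly like $\exp(R_\theta^{1/\theta})$ and exceeds $3$ already for quite modest $R_\theta$, while a Wiman expansion with a quantitatively controlled remainder only takes over at a $\theta$-dependent threshold that degenerates as $\theta\to2^-$ (note $E_2(-z)=\cos\sqrt{z}$ does not decay at all, so the decay of $E_\theta(-z)$ sets in later and later). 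Hence the two regimes do not patch together with any fixed numerical constant, and continuity of the coefficient $1/\Gamma(1-\theta)$ in $\theta$ says nothing about $\sup_{z\ge0}|E_\theta(-z)|$; the compactness-in-$\theta$ heuristic fails precisely because the limit case $\theta=2$ is non-decaying.

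What is missing is a device that keeps the cancellation which your absolute-value series bound throws away. The paper obtains it from the Mainardi--Gorenflo decomposition $E_\theta(-t^\theta)=f_\theta(t)+h_\theta(t)$ for $\theta\in(1,2)$: the completely monotone part $f_\theta$ has the explicit spectral representation \eqref{fTheta} and satisfies $-1<1-\tfrac{2}{\theta}\le f_\theta(t)\le 0$, while the oscillatory part $h_\theta(t)=\tfrac{2}{\theta}e^{t\cos(\pi/\theta)}\cos\bigl(t\sin(\pi/\theta)\bigr)$ obeys $|h_\theta(t)|\le 2/\theta$ because $\cos(\pi/\theta)<0$ on $(1,2)$; adding the two bounds gives $|E_\theta(-z)|\le \tfrac{4}{\theta}-1<3$ uniformly in $z$ and $\theta$. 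If you wish to keep your route, you would need a comparably explicit, $z$-uniform quantitative estimate (e.g.\ via this spectral representation or an inverse Laplace contour argument) in place of the crude compact-interval series bound; as written, the third inequality is not proved for $\theta\in(1,2)$.
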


 \proof 
Inequalities \eqref{EIneqG} and \eqref{EIneqR} follow from \cite[Theorem 1.6]{Podlubny1999} and \cite[Corollary 3.7]{Gorenflo2014} respectively. For the estimate \eqref{EIneqRBounded}, we distinguish three cases: (I) $\theta\in(0,1)$, (II) $\theta=1$, and (III) $\theta\in(1,2)$. In this first case, \eqref{EIneqRBounded} holds according to the uniform estimate \cite[Theorem 4]{Simon2014}:
\begin{equation}\label{EIneq}
\frac{1}{1+ \Gamma(1-\theta) z} \leq E_{\theta}(-z)  \leq \frac{1}{1+ \Gamma(1+\theta)^{-1} z},
\end{equation}
For the second case, we have $E_{1}(-z) = e^{- z}\leq1$. Now, we consider the last case. To this end, we set $z:=t^\theta$ and consider the function $E_{\theta}(-t^\theta)$. It is well known (see, e.g.,~\cite{Mainardi2000})
that $E_{\theta}(-t^\theta) = f_{\theta}(t) + h_{\theta}(t)$, where the completely monotone function $-f_{\theta}$ is defined by
\begin{equation}\label{fTheta}
f_{\theta}(t)= \frac{1}{\pi} \int^\infty_0 e^{-rt} \frac{r^{\theta-1} \sin(\theta\pi)}{r^{2\theta}+2r^{\theta}\cos(\theta\pi)+1} dr,
\end{equation}
and the oscillatory part $h_{\theta}$ is given by
\begin{equation*}
h_{\theta}(t)= \frac{2}{\theta} e^{t\cos(\pi/\theta)} \cos\left( t \sin(\pi/\theta) \right).
\end{equation*}
Obviously, for $\theta\in(1,2)$, $f_{\theta}$ is a monotonically increasing function such that $-1< 1- \frac{2}{\theta} = \frac{1}{\pi} \int^\infty_0 \frac{r^{\theta-1} \sin(\theta\pi)}{r^{2\theta}+2r^{\theta}\cos(\theta\pi)+1} dr \leq f_{\theta}(t) \leq 0$, while $|h_{\theta}(t)|\leq 2/\theta$ as $\cos(\pi/\theta)<0$. This implies $|E_{\theta}(-t^\theta)|\leq 3$.
 \proofend 

We remark that $C_\theta\to1$ as $\theta\searrow1$, and $C_\theta\to\infty$ as $\theta\nearrow2$.

In order to assign FAR to the general linear regularization schema introduced in Section~\ref{sec:2}, we exploit the one-to-one correspondence between the artificial time $t>0$ and the conventional regularization parameter
$\alpha>0$ by setting $t:=\alpha^{-1/\theta}$.

\begin{theorem}\label{FARRegu}
For $\theta\in(0,2)$, $g_\alpha (\lambda):= g^\theta(\alpha^{-1/\theta}, \lambda)$ represents a generator function in the sense of Definition~\ref{gAlpha} with $\theta_*=C_\theta/2$ and $C_\theta$ from
Lemma~\ref{MittagLefflerIneqAll}. Consequently, FAR is a linear regularization method for the ill-posed linear operator equation \eqref{OperatorEq}.
\end{theorem}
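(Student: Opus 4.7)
The plan is to verify the three conditions of Definition~\ref{gAlpha} directly for the family
\[
g_\alpha(\lambda) = g^\theta(\alpha^{-1/\theta},\lambda) = \alpha^{-1} E_{\theta,\theta+1}(-\lambda/\alpha),
\qquad
r_\alpha(\lambda) = E_\theta(-\lambda/\alpha),
\]
obtained by substituting $t^\theta = \alpha^{-1}$ into the formulas \eqref{g} and \eqref{r}. All three bounds are to be read off from Lemma~\ref{MittagLefflerIneqAll}, so the task is essentially a bookkeeping exercise together with one elementary real-variable inequality.

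For condition (i), \eqref{EIneqR} gives $|r_\alpha(\lambda)| \le C_\theta \alpha/(\alpha+\lambda)$, which tends to $0$ as $\alpha \to 0$ for each fixed $\lambda > 0$. For condition (ii), the uniform bound \eqref{EIneqRBounded} immediately yields $|r_\alpha(\lambda)| \le 3$ for all admissible $\alpha,\lambda$, so one may take $\gamma_1 := 3$. Both steps are one-liners once the correct estimate from Lemma~\ref{MittagLefflerIneqAll} is invoked.

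The only computation of substance is condition (iii). Using \eqref{EIneqG} with $s := \lambda/\alpha$, one rewrites
\[
\sqrt{\lambda}\,|g_\alpha(\lambda)| \;\le\; \frac{C_\theta}{\alpha}\cdot\frac{\sqrt{\lambda}}{1+\lambda/\alpha} \;=\; \frac{C_\theta}{\sqrt{\alpha}}\cdot\frac{\sqrt{s}}{1+s},
\]
and then applies the elementary inequality $\sqrt{s}/(1+s) \le 1/2$ (which is AM--GM for the pair $1,s$). This yields $\sqrt{\lambda}\,|g_\alpha(\lambda)| \le (C_\theta/2)/\sqrt{\alpha}$, matching the constant $\gamma_* = C_\theta/2$ asserted in the theorem. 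Once the three conditions are established, the conclusion that FAR is a linear regularization method is just the definition from Section~\ref{sec:2}. I expect no genuine obstacle: the three estimates of Lemma~\ref{MittagLefflerIneqAll} were tailored exactly to match conditions (i)--(iii), and the only non-bookkeeping step is the short AM--GM argument in (iii).
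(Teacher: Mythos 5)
Your proposal is correct and follows essentially the same route as the paper: conditions (i) and (ii) are read off from the estimates \eqref{EIneqR} and \eqref{EIneqRBounded}, and condition (iii) is obtained from \eqref{EIneqG} together with the bound $\sqrt{\lambda}/(\alpha+\lambda)\le 1/(2\sqrt{\alpha})$, yielding $\gamma_*=C_\theta/2$ exactly as in the paper's proof (which uses the same AM--GM step, merely without naming it).
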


 \proof 
We have to check the three requirements (i), (ii) and (iii) in Definition~\ref{gAlpha} for all $\theta\in(0,2)$.

Since $r_\alpha(\lambda) = r^{\theta}(\alpha^{-1/\theta}, \lambda) = E_{\theta}(-\lambda \alpha^{-1})$, it follows from \eqref{EIneqR} that
\begin{equation}\label{rEstimates}
|r_\alpha(\lambda)|  \leq  C_\theta \alpha/(\alpha + \lambda),
\end{equation}
which shows that (i) is satisfied.
The second condition (ii) is a consequence of \eqref{EIneqRBounded}. It remains to find a bound $\gamma_*$ in (iii). Note that $g_{\alpha}(\lambda)=\alpha^{-1} E_{\theta,\theta+1}(-\lambda \alpha^{-1})$. By using \eqref{EIneqG}, we obtain the inequalities
\begin{equation}\label{IneqCondition3}
\sqrt{\lambda} |g_{\alpha}(\lambda)| \leq \sqrt{\lambda} C_\theta /(\alpha + \lambda)  \leq C_\theta / (2\sqrt{\alpha})
\end{equation}
and hence $\gamma_*=C_\theta /2$, which completes the proof.
 \proofend 

Referring back to the time variable $t$  we obtain a linear regularization method for the ill-posed operator equation \eqref{OperatorEq} with the procedure
\begin{equation}\label{regularization2}
x^\delta(t) = g_{t^{-\theta}}(A^* A)A^* y^\delta.
\end{equation}

\begin{figure}[!t]
\caption{Behavior of bias function $r_\alpha(\lambda)$ for varying $\theta$.}
\subfigure{
\includegraphics[width=0.97\textwidth]{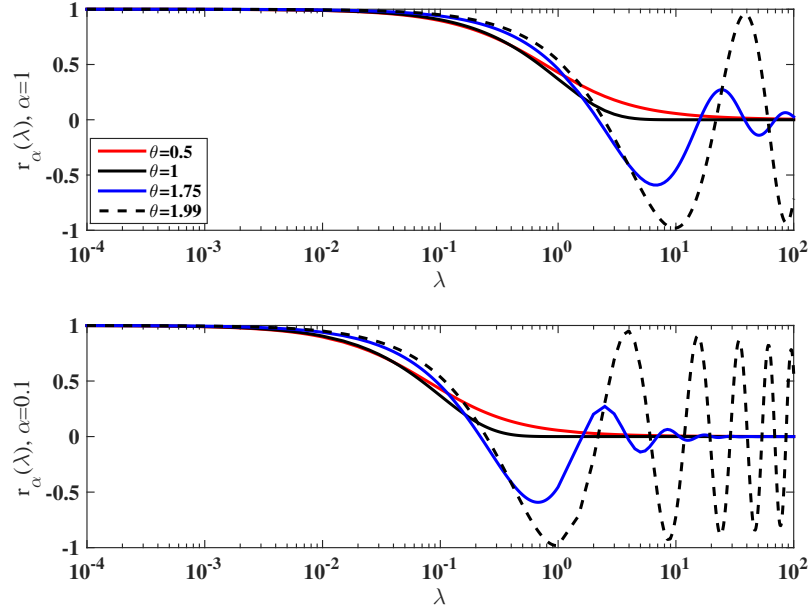}
}
\label{FigbiasFun}
\end{figure}

The behavior of the
bias function $r_\alpha(\lambda)$, or equivalently $r_{t^{-\theta}}(\lambda)$, is visualized in Figure \ref{FigbiasFun}. It follows from Remark \ref{RekTheta} that for $\theta=2$, $r_{t^{-2}}(\lambda) = \cos(\sqrt{\lambda t^{2}})$. Obviously, we have for all $\lambda$ under consideration $r_{t^{-2}}(\lambda) \not\to0$ as $t\to\infty$, which damages the first condition of Definition \ref{gAlpha}.  In order to obtain a regularization method in the case $\theta=2$, an additional damping term in the model \eqref{FractionalFlow} should be introduced, see \cite{ZhaHof2018AA} for details.


\begin{proposition}\label{ThmQualificationPower}  For all $\theta\in(0,1)\cup(1,2)$ the function $\varphi(\lambda)=\lambda^p$ is a qualification of the FAR method if and only if $0<p \le 1$, i.e.~there is a saturation for $p=1$.
\end{proposition}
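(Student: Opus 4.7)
The plan is to reduce the qualification condition to a scale-free one-variable statement and then use the sharp asymptotics of $E_\theta(-s)$ at infinity. By \eqref{r}, the FAR bias function is $r_\alpha(\lambda)=E_\theta(-\lambda/\alpha)$, so the substitution $s=\lambda/\alpha$ yields
\[
\sup_{\lambda\in(0,\|A\|^2]}|r_\alpha(\lambda)|\,\lambda^p \;=\; \alpha^p \sup_{0<s\le \|A\|^2/\alpha} |E_\theta(-s)|\, s^p.
\]
Since $\|A\|^2/\alpha\to\infty$ as $\alpha\to 0^+$, the qualification condition of Definition~\ref{QualificationDef} holds for some $\gamma>0$ and some $\bar\alpha>0$ if and only if the scale-free quantity
\[
M_\theta(p):=\sup_{s\ge 0}|E_\theta(-s)|\, s^p
\]
is finite, in which case one may take $\gamma=M_\theta(p)$. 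Thus the proof reduces to determining the set of $p>0$ for which $M_\theta(p)<\infty$.

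The sufficiency part, $0<p\le 1$, is immediate from Lemma~\ref{MittagLefflerIneqAll}: the bound \eqref{EIneqR} gives $|E_\theta(-s)|\, s^p\le C_\theta\, s^p/(1+s)$, and the right-hand side is bounded on $\mathbb{R}_+$ precisely when $p\le 1$; in particular one obtains an explicit $\gamma$ from its maximum.

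For the necessity part, $p>1$, I would split according to the sign behavior of $E_\theta$. When $\theta\in(0,1)$, the lower bound in \eqref{EIneq} yields $E_\theta(-s)\ge 1/(1+\Gamma(1-\theta)s)>0$, hence $|E_\theta(-s)|\,s^p\ge s^p/(1+\Gamma(1-\theta)s)\to\infty$ as $s\to\infty$, so $M_\theta(p)=\infty$. The delicate case is $\theta\in(1,2)$, where $E_\theta(-s)$ oscillates and changes sign so no pointwise lower envelope is available. Here I would reuse the decomposition $E_\theta(-t^\theta)=f_\theta(t)+h_\theta(t)$ already established in the proof of Lemma~\ref{MittagLefflerIneqAll}. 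Because $\cos(\pi/\theta)<0$ for $\theta\in(1,2)$, the oscillatory part $h_\theta(t)$ decays exponentially and contributes only a bounded error. For the monotone integral \eqref{fTheta}, a Watson-type expansion at $r=0$, combined with the reflection formula $\Gamma(\theta)\Gamma(1-\theta)=\pi/\sin(\pi\theta)$, gives
\[
f_\theta(t)\;\sim\;\frac{\sin(\theta\pi)\,\Gamma(\theta)}{\pi\,t^\theta}\;=\;\frac{1}{\Gamma(1-\theta)\,t^\theta}\qquad\text{as }t\to\infty.
\]
Passing back to $z=t^\theta$ yields $|E_\theta(-z)|\sim 1/(|\Gamma(1-\theta)|\,z)$, so $|E_\theta(-z)|\,z^p\to\infty$ along a suitable sequence whenever $p>1$. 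This forces $M_\theta(p)=\infty$ and identifies $p=1$ as the saturation index.

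The main obstacle is the case $\theta\in(1,2)$: the pointwise monotone bound used for $\theta\in(0,1)$ is unavailable, and one really needs the sharp first-order Mittag-Leffler asymptotic to show that $|E_\theta(-s)|$ is not $o(1/s)$. This polynomial rather than exponential decay is exactly what distinguishes $\theta\neq 1$ from Showalter's method ($\theta=1$, where $r_\alpha=e^{-\lambda/\alpha}$ has no saturation), and explains why the threshold lands precisely at $p=1$.
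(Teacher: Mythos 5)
Your proposal is correct and takes essentially the same route as the paper: sufficiency for $0<p\le 1$ comes from the bound $|E_\theta(-z)|\le C_\theta/(1+z)$ exactly as in \eqref{IneqPower}, and saturation comes from the lower bound \eqref{EIneq} for $\theta\in(0,1)$ together with the asymptotic $E_\theta(-z)\,z\to 1/\Gamma(1-\theta)$ as $z\to\infty$, which the paper simply cites from Podlubny while you rederive it for $\theta\in(1,2)$ via Watson's lemma applied to the spectral part $f_\theta$ from the proof of Lemma~\ref{MittagLefflerIneqAll} plus the reflection formula. Your scale-free reformulation of Definition~\ref{QualificationDef} as $M_\theta(p)=\sup_{s\ge 0}|E_\theta(-s)|\,s^p<\infty$ is a valid and tidy repackaging, and the remaining details check out.
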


 \proof 
The qualification of the FAR can be deduced from the estimate \eqref{EIneqR} and the following inequalities
\begin{equation}\label{IneqPower}
r_\alpha(\lambda) = \left| E_{\theta}(-\lambda \alpha^{-1}) \lambda^p  \right| \leq \frac{C_\theta \lambda^p}{1+ \lambda \alpha^{-1}} \leq \gamma(p,\theta) \alpha^{\min(p,1)},
\end{equation}
where $\gamma(p,\theta)=  C_\theta p^p(1-p)^{1-p}$ for $0<p<1$, and $\gamma(p,\theta)=  C_\theta \|A\|^{2(p-1)}$ for $1\leq p<2$. The saturation at $p=1$ follows from the lower bound in \eqref{EIneq} and the asymptotical behavior, see e.g. \cite[\S~1.2]{Podlubny1999}, $E_\theta(-z)z\to 1/\Gamma(1-\theta)$ as $z\to\infty$.
 \proofend 

\begin{remark}
By the well-known results of Showalter's method, see e.g. \cite[Remark 2.2]{ZhaHof2018AA}, $\varphi(\lambda)=\lambda^p$ is a qualification of the FAR method with parameter $\theta=1$ for all $p>0$ without saturation.
\end{remark}

\begin{theorem}\label{FARTikhonov}
Let $x(t)$ and $x^\delta(t)$ be solutions of \eqref{FractionalFlow} with noise-free and noisy data, respectively. Then, under the assumption $x^\dagger \in \mathcal{R}((A^*A)^p)$ (cf.~(\ref{HolderSourceIntro})), we have for the FAR method in the case $\theta\in(0,1)\cup(1,2)$ and $p\in(0,1]$ the convergence rate
\begin{equation}\label{ErrorEstimatePriori1Sec2}
\| x^\delta(T_*) - x^\dagger \| = \mathcal{O}\left(  \delta^{\frac{2p}{2p+1}} \right) \quad \mbox{as} \quad \delta \to 0
\end{equation}
whenever the terminating time $T_*=T_*(\delta)$  is chosen according to
\begin{equation}\label{ErrorEstimatePriori1Sec2T}
T_* (\delta) \sim  \delta^{-\frac{2}{\theta (2p+1)}}.
\end{equation}
Moreover, the following additional assertions are true if $\theta\in(0,1)$:
\begin{itemize}
\item[(i)] The convergence rate of method error $\|x(t)-x^\dagger\|=\mathcal{O}(t^{-\theta p})$ holds, if and only if $x^\dagger$ satisfies a source condition of type \eqref{HolderSourceIntro}.
\item[(ii)] The worst-case convergence rate
\begin{equation}\label{WorstErr}
\sup_{y^\delta: \|y^\delta - y\|\leq \delta} \inf_{t\in \mathbb{R}_+} \| x^\delta(t;y^\delta) - x^\dagger \|^2 = \mathcal{O}\left(  \delta^{\frac{2p}{2p+1}} \right)
\end{equation}
is equivalent to the following weaker condition
\begin{equation}\label{WeakerSource}
\|E_s x^\dagger\|^2:= \int^s_0 1 d \|E_\lambda x^\dagger\|^2 = \mathcal{O}\left(  s^{2p} \right).
\end{equation}
\item[(iii)] If a terminating time $T_*$ exists such that
\begin{equation}\label{WorstErr2}
\sup_{y^\delta: \|y^\delta - y\|\leq \delta} \| x^\delta(T_*;y^\delta) - x^\dagger \|^2 = o\left(  \delta^{\frac{2p}{2p+1}} \right),
\end{equation}
then $x^\dagger=0$.
\end{itemize}
\end{theorem}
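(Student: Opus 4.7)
The plan is to obtain the main rate \eqref{ErrorEstimatePriori1Sec2}--\eqref{ErrorEstimatePriori1Sec2T} directly from the general linear-regularization framework of Section~\ref{sec:2}, and then to handle the three additional assertions by exploiting the two-sided Mittag-Leffler bound \eqref{EIneq}, which is available only for $\theta\in(0,1)$ and thereby explains the restriction in (i)--(iii).

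First, for the main rate I would combine Proposition~\ref{ThmQualificationPower} (the qualification $\lambda^p$ for $p\in(0,1]$) with the splitting \eqref{errorBounds2} and the constant $\gamma_*=C_\theta/2$ from Theorem~\ref{FARRegu} to get
\[
\|x^\delta_\alpha-x^\dagger\|\le \gamma(p,\theta)\,\rho\,\alpha^p+\frac{C_\theta}{2}\,\frac{\delta}{\sqrt{\alpha}}.
\]
This is precisely the situation of Example~\ref{ex:ex1}, so balancing yields $\alpha_*(\delta)\sim\delta^{2/(2p+1)}$ and the rate $\delta^{2p/(2p+1)}$; the substitution $t=\alpha^{-1/\theta}$ then converts $\alpha_*$ into the claimed stopping time $T_*(\delta)\sim\delta^{-2/(\theta(2p+1))}$.

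For assertion (i), the ``if'' direction is immediate by specializing the noise-free estimate to $\alpha=t^{-\theta}$. For ``only if'' I would use the spectral family $\{E_\lambda\}$ of $A^*A$ to write
\[
\|x(t)-x^\dagger\|^2=\int_0^{\|A\|^2} E_\theta(-\lambda t^\theta)^2\,d\|E_\lambda x^\dagger\|^2,
\]
and then apply the lower bound in \eqref{EIneq} (valid for $\theta\in(0,1)$), which gives $E_\theta(-\lambda t^\theta)\ge \alpha/(\alpha+\Gamma(1-\theta)\lambda)$ with $\alpha=t^{-\theta}$. The hypothesized method-error rate thus becomes a rate for a classical Tikhonov-type bias integral, and I would conclude by invoking the standard Tikhonov converse theorem in the range $p\in(0,1]$ (see, e.g.,~\cite[Thm.~4.17]{engl1996regularization}) to recover $x^\dagger\in\mathcal{R}((A^*A)^p)$. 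For (ii) the same two-sided comparison makes the FAR bias function equivalent, up to $\theta$-dependent multiplicative constants, to the Tikhonov bias $\alpha/(\alpha+\lambda)$, so that the infimum-over-$t$ and supremum-over-noise appearing on the left-hand side of \eqref{WorstErr} reduces to the classical balance of $\max(\|E_s x^\dagger\|^2,\,\delta^2/s)$; optimizing in $s$ produces exactly the equivalence with \eqref{WeakerSource}. Finally, (iii) is a saturation corollary of (ii): a strict little-$o$ rate in \eqref{WorstErr2} propagates to $\|E_s x^\dagger\|^2=o(s^{2p})$, and testing the worst-case supremum with an admissible noise $y^\delta-y=\delta v$ concentrated on a single spectral interval below $s$ forces $\|E_s x^\dagger\|=0$ for every $s>0$, i.e.\ $x^\dagger=0$.

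The delicate parts are the converse in (i) and the strict lower bound underlying (iii): both depend essentially on the lower Mittag-Leffler estimate in \eqref{EIneq}, which is known only for $\theta\in(0,1)$. Without that lower bound the reduction to the well-understood Tikhonov-type bias breaks down and the usual saturation/converse machinery cannot be imported verbatim, which is precisely why (i)--(iii) are restricted to the subrange $\theta\in(0,1)$.
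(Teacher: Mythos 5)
Your proposal is correct and follows essentially the same route as the paper: the main rate is obtained exactly as in the paper's proof (Proposition~\ref{ThmQualificationPower} combined with \eqref{errorBounds2}, \eqref{eq:estpower} and the substitution $T_*=\alpha_*^{-1/\theta}$), and for (i)--(iii) the paper likewise appeals to the standard converse and saturation arguments of \cite[\S~4.2]{engl1996regularization} and \cite{FHM11} ``in light of'' the estimates \eqref{EIneqG}, \eqref{EIneq} and \eqref{rEstimates}, which is precisely the reduction of the FAR bias to a Tikhonov-type bias via the two-sided Mittag-Leffler bound \eqref{EIneq} that you spell out, including the explanation for the restriction $\theta\in(0,1)$. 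Your write-up is, if anything, more explicit than the paper's own two-line argument.
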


 \proof 
The assertions concerning the convergence rate \eqref{ErrorEstimatePriori1Sec2} follow directly from (\ref{eq:estpower}) by setting $T_*(\delta)=(\alpha_*)^{-1/\theta}$ in combination with Proposition~\ref{ThmQualificationPower}. The remaining assertions (i), (ii) and (iii) of the theorem are based on standard arguments of regularization theory (\cite[\S~4.2]{engl1996regularization} and \cite{FHM11}) in light of estimates \eqref{EIneqG}, \eqref{EIneq}, and \eqref{rEstimates}.
 \proofend 

\begin{remark}
(a) The first part of Theorem \ref{FARTikhonov} means that the optimal convergence rates \eqref{ErrorEstimatePriori1Sec2} of FAR can be obtained with approximately $1/\theta$ of iterations than needed for ordinary asymptotical regularization method \cite{engl1996regularization}. Therefore, FAR with $\theta\in(1,2)$ yields an accelerated regularization method for all $p>0$. As the estimate \eqref{eq:estpower}
shows, for fixed $\delta>0$ the error of regularization is proportional to $\gamma$ and $\gamma_*$. These constants, however, tend to infinity as $\theta \nearrow 2$ such that values $\theta$ close to $2$ need
not be the best choices of this parameter.
(b) The source condition \eqref{HolderSourceIntro} implies the weaker source condition \eqref{WeakerSource}. The converse is in general not true, however, \eqref{WeakerSource} implies $x^\dagger \in \cap_{\nu<p}  \mathcal{R}((A^*A)^\nu)$, see \cite[Lemma 4.12]{engl1996regularization}.
\end{remark}

In practice, the a priori stopping rule \eqref{ErrorEstimatePriori1Sec2T} in Theorem \ref{FARTikhonov} is not realistic, since a good terminating time $T_*$ requires in addition to the knowledge of $\delta$
also the knowledge of $p$ characterizing the specific smoothness of the unknown exact solution $x^\dagger$. Therefore, a posteriori parameter choices of the stopping parameter are preferred, and we
consider Morozov's discrepancy principle as the most prominent version exploiting zeros of the discrepancy function
\begin{equation}\label{fractionalDP}
\chi(t) = \|Ax^\delta(t)-y^\delta\| - \tau \delta,
\end{equation}
where we assume $\tau> 3$ for the occurring factor of the noise level $\delta$.

\begin{lemma}\label{Rootdiscrepancy}
If for $\tau>3$ the condition $\|y^\delta\|> \tau \delta$ is satisfied, the function $\chi(T)$ has at least one root.
\end{lemma}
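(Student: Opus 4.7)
The plan is to invoke the intermediate-value theorem for the continuous function $\chi$ after locating a positive value at $t=0$ and a negative value as $t\to\infty$. Since $b_k=0$ is assumed throughout this section, we have $x^\delta(0)=0$, so $\chi(0)=\|y^\delta\|-\tau\delta>0$ directly from the standing hypothesis.

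Next I would use the singular-value decomposition representation derived in \eqref{SolutionX} together with the bias identity \eqref{r}. Writing $Q$ for the orthogonal projection onto $\overline{\mathcal{R}(A)}$, the orthogonality $(Ax^\delta(t)-Qy^\delta)\perp (I-Q)y^\delta$ together with $y=Ax^\dagger\in\mathcal{R}(A)$ gives
\begin{equation*}
\|Ax^\delta(t)-y^\delta\|^2 = \sum_{j=1}^\infty \bigl|E_\theta(-\sigma_j^2 t^\theta)\bigr|^2\,|\langle y^\delta,v_j\rangle|^2 + \|(I-Q)y^\delta\|^2,
\end{equation*}
where the first term comes from $Ax^\delta(t)-Qy^\delta=-\sum_j E_\theta(-\sigma_j^2 t^\theta)\langle y^\delta,v_j\rangle v_j$, and the second term satisfies $\|(I-Q)y^\delta\|\le \|y^\delta-y\|\le \delta$.

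Now I would pass to the limit $t\to\infty$. For each fixed $j$, the decay bound \eqref{EIneqR} of Lemma~\ref{MittagLefflerIneqAll} yields $E_\theta(-\sigma_j^2 t^\theta)\to 0$. The uniform bound \eqref{EIneqRBounded} provides the integrable majorant $9\,|\langle y^\delta,v_j\rangle|^2$ needed for dominated convergence, so the series in the display above tends to $0$. Consequently,
\begin{equation*}
\lim_{t\to\infty}\chi(t) \;=\; \|(I-Q)y^\delta\| - \tau\delta \;\le\; (1-\tau)\delta \;<\;0,
\end{equation*}
since $\tau>3>1$. The map $t\mapsto x^\delta(t)$ is continuous on $[0,\infty)$ (it is even real analytic via \eqref{SolutionX}, the uniform bounds on $E_{\theta,\theta+1}$ justifying termwise continuity), so $\chi$ is continuous, and the intermediate-value theorem produces a root $T_*\in(0,\infty)$.

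The only mild technical point is justifying the interchange of limit and summation at $t\to\infty$; this is routine via the Lebesgue dominated convergence theorem for counting measure, using the uniform bound $|E_\theta(-\cdot)|\le 3$ from \eqref{EIneqRBounded} (which is where the $\theta\in(1,2)$ oscillatory regime could otherwise cause trouble). The sharper assumption $\tau>3$ is not needed for mere existence of a root — $\tau>1$ would suffice here — but it is already consistent with the hypothesis that will be exploited in the subsequent convergence analysis.
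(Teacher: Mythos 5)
Your proof is correct, but it follows a different route than the paper's. The paper never decomposes $y^\delta$ along $\overline{\mathcal{R}(A)}$; instead it writes the residual as $r_{t^{-\theta}}(AA^*)y^\delta$, splits $y^\delta = Ax^\dagger + (y^\delta-y)$, bounds the noise part by $3\delta$ via the uniform bound \eqref{EIneqRBounded} (this is exactly where $\tau>3$ enters), and bounds the exact-data part by $\|x^\dagger\|\sup_\lambda |r_{t^{-\theta}}(\lambda)|\sqrt{\lambda}\le \gamma(p,\theta)\|x^\dagger\|\,t^{-\theta/2}$ using the $\sqrt{\lambda}$-type qualification estimate stemming from \eqref{EIneqR}; Bolzano's theorem then finishes, just as in your argument, from $\chi(0)=\|y^\delta\|-\tau\delta>0$. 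Your orthogonal splitting with $Q$ plus dominated convergence (pointwise decay \eqref{EIneqR}, uniform majorant \eqref{EIneqRBounded}) is more elementary in the sense that it yields only $\lim_{t\to\infty}\chi(t)\le(1-\tau)\delta<0$ without any rate, and you correctly observe that it establishes existence of a root already for $\tau>1$; the paper's computation is slightly less sharp on the constant ($\tau>3$) but buys an explicit decreasing majorant $\zeta(t)=\gamma(p,\theta)\|x^\dagger\|t^{-\theta/2}-(\tau-3)\delta$ for $\chi(t)$, which the authors reuse in the subsequent remark to explain the qualitative (decreasing, possibly oscillating) behavior of the discrepancy function. Your continuity argument via the uniform bound and termwise (Weierstrass-type) convergence is fine, though the claimed real analyticity is more than is needed and not fully justified as stated; uniform convergence of the continuous partial sums suffices.
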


 \proof 
By the explicit formula of $x^\delta(t)$, c.f. \eqref{regularization2}, we derive together with the estimates \eqref{EIneqRBounded} and \eqref{IneqPower} that
\begin{equation*}
\begin{array}{ll}
\chi(t) &= \|Ax^\delta(t)-y^\delta\| - \tau  \delta = \|r_{t^{-\theta}}(AA^*)(y-(y-y^\delta)) \| - \tau  \delta  \\ &
\leq \|r_{t^{-\theta}}(AA^*) A x^\dagger \| +  \|r_{t^{-\theta}}(AA^*)(y-y^\delta)\| - \tau  \delta  \\ &
\leq \|x^\dagger \| \sup_{\lambda\in(0,\|A\|^2]} |r_{t^{-\theta}}(\lambda)|\sqrt{\lambda}  - (\tau -3) \delta \\ &
\leq \gamma(p,\theta) \|x^\dagger \| t^{-\theta/2} - (\tau -3) \delta  \to -(\tau -3)  \delta <0
\end{array}
\end{equation*}
as $t\to \infty$. The continuity of $\chi(t)$ is obvious as we are dealing with the linear problem.  Since $\chi(0)= \|y^\delta\|- \tau  \delta>0$, the existence of the root of $\chi(t)$ follows from the Bolzano's theorem.
 \proofend 

\begin{remark}
As the proof of Lemma \ref{Rootdiscrepancy} indicates, the function $\chi(t)$ is bounded above by a decreasing function $\zeta(t):=\gamma(p,\theta) \|x^\dagger \| t^{-\theta/2} - (\tau -3) \delta$. Thus, the trend of $\chi(t)$ is to be a decreasing function, where oscillations may occur, and we refer to Figure~\ref{Fig:Discrepancy} for illustration.
\end{remark}

\begin{figure}[!htb]
\caption{The behavior of $\chi(T)$ from (\ref{fractionalDP}) with different fractional orders $\theta$.}
\includegraphics[width=1.\textwidth]{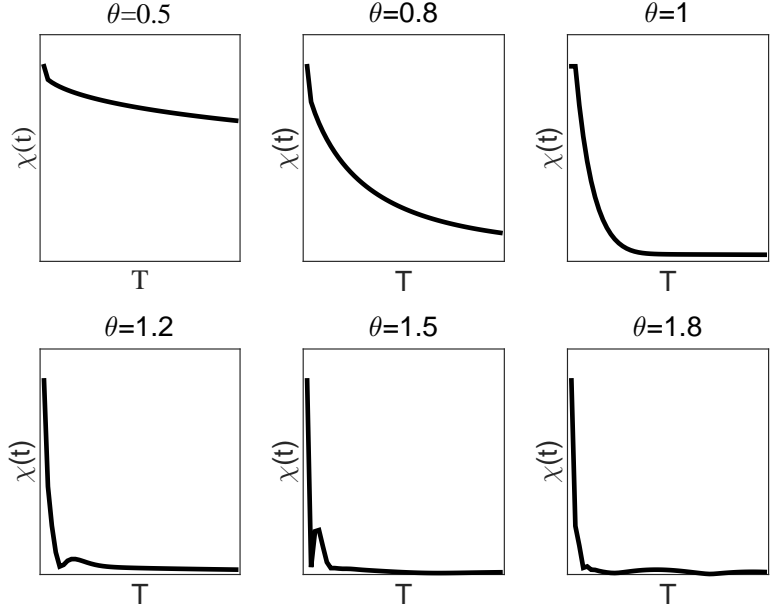}
\label{Fig:Discrepancy}
\vspace{-1cm}
\end{figure}

\begin{theorem}\label{ThmPosteriori}
Under the assumption $x^\dagger \in \mathcal{R}((A^*A)^p)$,  we have for the terminating time $T_*=T_*(\delta,y^\delta)$ of FAR chosen as a solution of $\chi(T)=0$ (cf.~\ref{fractionalDP}) the convergence rates
\begin{equation}\label{ErrorEstimatePrioriT}
T_* = \mathcal{O} \left( \delta^{-\frac{2}{\theta(2p+1)}} \right) \qquad \mbox{and}   \qquad \| x^\delta(T_*) - x^\dagger \| = \mathcal{O} \left( \delta^{\frac{2p}{2p+1}}  \right)
\end{equation}
for $p\in (0,1]$ if $\theta\in(0,1)\cup (1,2)$ and for all $p>0$ if $\theta=1$.
\end{theorem}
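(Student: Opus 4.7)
The plan is to adapt the classical discrepancy-principle analysis for linear regularization methods (as in \cite[\S 4.3]{engl1996regularization}) to the FAR setting, using the spectral representation \eqref{regularization2} together with the Mittag-Leffler bounds of Lemma~\ref{MittagLefflerIneqAll}.

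First I would bound $T_*$ from above. For any candidate time $T$, decompose
\[
\|Ax^\delta(T)-y^\delta\| = \|r_{T^{-\theta}}(AA^*) y^\delta\| \le \|r_{T^{-\theta}}(AA^*) A x^\dagger\| + 3\delta,
\]
where the noise part is controlled by the uniform bound \eqref{EIneqRBounded}. Using the intertwining identity $A^* f(AA^*) = f(A^*A) A^*$ and the source condition $x^\dagger = (A^*A)^p v$, the first summand equals $\|r_{T^{-\theta}}(A^*A)(A^*A)^{p+1/2} v\|$, which is controlled by the qualification estimate \eqref{IneqPower}, yielding a bound of order $T^{-\theta\,\min(p+1/2,1)}\rho$. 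Matching this with $(\tau-3)\delta$ at the scaling $T\sim\delta^{-2/(\theta(2p+1))}$ gives the claimed upper bound on $T_*$.

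Next I would turn the defining identity $\chi(T_*)=0$ into an error estimate. From $\chi(T_*)=0$ one has immediately
\[
\|A(x^\delta(T_*)-x^\dagger)\| \le \|Ax^\delta(T_*)-y^\delta\| + \|y-y^\delta\| \le (\tau+1)\delta.
\]
Combining this discrepancy bound with the source condition through a Heinz--Kato-type moment inequality
\[
\|x^\delta(T_*)-x^\dagger\| \le C\,\rho^{1/(2p+1)}\,\|A(x^\delta(T_*)-x^\dagger)\|^{2p/(2p+1)}
\]
gives the rate $\mathcal{O}(\delta^{2p/(2p+1)})$ on the interval $p\in(0,1/2]$. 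For $p\in(1/2,1]$ the moment inequality is no longer directly applicable, so instead I would feed the $T_*$-estimate from the first step into the deterministic a priori bound \eqref{errorBounds2}; matching the two terms $T_*^{-\theta p}\rho$ and $\gamma_*\delta T_*^{\theta/2}$ at $T_*\sim\delta^{-2/(\theta(2p+1))}$ reproduces the same rate. For $\theta=1$ everything reduces to the classical Showalter setting \eqref{Choice_Showalter}, and the argument extends to all $p>0$ because the qualification is then unbounded.

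The main obstacle sits precisely in the range $p\in(1/2,1]$: the qualification in \eqref{IneqPower} saturates at exponent $1$, so a naive use of it in the first step only yields $T_*=\mathcal{O}(\delta^{-2/\theta})$, which is too large to feed back into \eqref{errorBounds2} and recover the optimal rate. Bridging this gap demands a method-specific refinement, either exploiting the positivity/monotonicity of the Mittag-Leffler bias $E_\theta(-z)$ (visible in \eqref{EIneq} for $\theta\in(0,1)$ and in the decomposition $E_\theta(-t^\theta)=f_\theta(t)+h_\theta(t)$ from the proof of Lemma~\ref{MittagLefflerIneqAll} for $\theta\in(1,2)$), or an interpolation inequality tailored to the explicit structure of the FAR generator rather than to its generic qualification bound. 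I expect this to be by far the most delicate step of the proof.
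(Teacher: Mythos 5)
Your overall strategy (discrepancy value plus source condition plus an interpolation/moment inequality, and a qualification-type bound for $T_*$) is the same as the paper's, but two steps as you state them do not go through. First, your moment inequality is applied to the \emph{full} error $x^\delta(T_*)-x^\dagger$; this is not justified, because the error contains the noise-propagation part $g_{T_*^{-\theta}}(A^*A)A^*(y^\delta-y)$, which does not lie in $\mathcal{R}((A^*A)^p)$ with a source element of uniformly bounded norm (its formal source norm blows up as $\delta\to 0$). The paper instead applies the interpolation inequality only to the bias part $r_{T_*^{-\theta}}(A^*A)x^\dagger=(A^*A)^p\,r_{T_*^{-\theta}}(A^*A)v$, whose source element $r_{T_*^{-\theta}}(A^*A)v$ is bounded by \eqref{EIneqRBounded}, estimates $\|A\,r_{T_*^{-\theta}}(A^*A)x^\dagger\|\le(\tau+3)\delta$ from the discrepancy equation as in \eqref{PosterioriProofIneq1}--\eqref{PosterioriProofIneq3}, and then treats the noise term separately through $\gamma_*\sqrt{T_*^\theta}\,\delta$ together with the $T_*$-bound from \eqref{PfQualificationIneq}. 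Note that this splitting is needed for \emph{all} $p$, not only for $p>1/2$: even in your range $p\le 1/2$ your argument has this gap.

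Second, your fallback for $p\in(1/2,1]$ — feeding the upper bound on $T_*$ into \eqref{errorBounds2} and ``matching'' $\rho\,T_*^{-\theta p}$ with $\gamma_*\delta\,T_*^{\theta/2}$ — is not a proof: the discrepancy principle only gives an \emph{upper} bound on $T_*$, which controls the noise term but says nothing about the bias term $T_*^{-\theta p}$; controlling that term by matching would require a lower bound on $T_*$ that is not available. The paper avoids any lower bound on $T_*$ precisely by the interpolation step above, which bounds $\|r_{T_*^{-\theta}}(A^*A)x^\dagger\|$ directly by $\mathcal{O}(\delta^{2p/(2p+1)})$ from the discrepancy value. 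You are right, however, to flag the saturation issue: the bound \eqref{IneqPower} only yields the qualification exponent $\min(p+1/2,1)$, so for $p>1/2$ and $\theta\neq 1$ the estimate \eqref{PfQualificationIneq} with exponent $p+1/2$ needs more justification than Proposition~\ref{ThmQualificationPower} provides — the paper applies it without further comment — but your proposal does not supply the missing argument either, so as it stands the theorem is not established on $(1/2,1]$, and on $(0,1/2]$ it fails at the moment-inequality step described above.
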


\begin{proof}
Using the interpolation inequality $\|B^p u\| \leq \|B^q u\|^{p/q} \|u\|^{1-p/q}$ and the source conditions $x^\dagger = (A^*A)^p v$, we deduce that
\begin{eqnarray}\label{PosterioriProofIneq1}
\begin{array}{ll}
\| r_{\alpha}(A^*A) x^\dagger \|
= \| (A^*A)^{p} r_{\alpha}(A^*A) v \| \\ \quad
\leq \| (A^*A)^{(p+1/2)} r_{\alpha}(A^*A) v \|^{2p/(2p+1)} \cdot \| r_{\alpha}(A^*A) v\|^{1/(2p+1)} \\ \quad
= \| A r_{\alpha}(A^*A) x^\dagger\|^{2p/(2p+1)} \cdot \| r_{\alpha}(A^*A) v\|^{1/(2p+1)}.
\end{array}
\end{eqnarray}
Since $T_*$ is chosen according to the equation $\chi(T)=0$, we derive that
\begin{eqnarray}\label{PosterioriProofIneq2}
\begin{array}{ll}
\tau  \delta = \|A x^\delta(T_*)-y^\delta\| \\ \qquad
= \left\| A r_{T^{-\theta}_*}(A^*A) x^\dagger + r_{T^{-\theta}_*}(AA^*) (y^\delta-y) \right\| \\ \qquad
\geq \| A r_{T^{-\theta}_*}(A^*A) x^\dagger \| - \|r_{T^{-\theta}_*}(AA^*) (y^\delta-y) \|.
\end{array}
\end{eqnarray}
Now we combine the estimates (\ref{PosterioriProofIneq1}) and (\ref{PosterioriProofIneq2}) to obtain, with the source conditions, that
\begin{eqnarray}\label{PosterioriProofIneq3}
\begin{array}{ll}
\| r_{\alpha}(A^*A^*)x^\dagger\|
\leq \| A r_{\alpha}(A^*A) x^\dagger\|^{2p/(2p+1)} \cdot \| r_{\alpha}(A^*A) v\|^{1/(2p+1)} \\ \quad
\leq \left( \tau  \delta + \|r_{T^{-\theta}_*}(AA^*) (y^\delta-y) \| \right)^{2p/(2p+1)} \rho^{1/(2p+1)} \\ \quad
\leq c'_2 \rho^{1/(2p+1)} \delta^{2p/(2p+1)}
\end{array}
\end{eqnarray}
where $c'_2:= \left( \tau + 3 \right)^{2p/(2p+1)}$.

On the other hand, in a similar fashion to (\ref{PosterioriProofIneq2}), it is easy to show that
\begin{equation}\label{PosterioriProofIneq2Log}
\begin{array}{ll}
\tau \delta &= \|A x^\delta(T_*)-y^\delta\| = \| r_{T^{-\theta}_*}(AA^*) y + r_{T^{-\theta}_*}(AA^*) (y^\delta-y) \| \\ &
\leq \| A r_{T^{-\theta}_*}(A^*A) x^\dagger\| + \|r_{T^{-\theta}_*}(AA^*) (y^\delta-y) \| \\ &
\leq \| A r_{T^{-\theta}_*}(A^*A) x^\dagger \| + 3\delta.
\end{array}
\end{equation}
If we combine the above inequality with the source conditions and the qualification inequality, we obtain
\begin{eqnarray}\label{PfQualificationIneq}
\begin{array}{ll}
(\tau -3)\delta \leq \| A r_{T^{-\theta}_*}(A^*A) x^\dagger\| \\ \qquad
\leq  \| (A^*A)^{p+1/2} r_{T^{-\theta}_*}(A^*A) v \| \leq 2\rho \gamma (T_*)^{-\theta(2p+1)/2},
\end{array}
\end{eqnarray}
which yields the estimate for $T_*$ in (\ref{ErrorEstimatePrioriT}).
Finally, using (\ref{errorBounds2}), the estimate for $T_*$ and (\ref{PosterioriProofIneq3}), we conclude that
\begin{eqnarray*}
\begin{array}{ll}
\| x^\delta(T_*) - x^\dagger \| \leq \| r_{\alpha}(A^*A^*) x^\dagger \| + \gamma_* \sqrt{T^\theta_*} \delta \\ \qquad
\leq c_1 \rho^{1/(2p+1)} \delta^{2p/(2p+1)} + \gamma_* \left( \frac{2\gamma}{\tau -3} \right)^{1/(2p+1)} \rho^{1/(2p+1)} \delta^{2p/(2p+1)} \\ \qquad
= C_1 \rho^{1/(2p+1)} \delta^{2p/(2p+1)}.
\end{array}
\end{eqnarray*}
This completes the proof.
\end{proof}

\section{FAR under logarithmic source conditions}
\label{logarithmic}

\setcounter{section}{4}
\setcounter{equation}{0}\setcounter{theorem}{0}

Now, we turn to the study of FAR method under conditions \eqref{logarithmicCondition}.

\begin{proposition}\label{ThmQualification2}
For all $\theta\in(0,2)$ and all $\mu>0$ the index function $\varphi_\mu(\lambda)$, defined in \eqref{logarithmicQualification}, is a qualification of the FAR method.
\end{proposition}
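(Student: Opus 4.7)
The plan is to reduce the claim to the already-known qualification $\psi(\lambda)=\lambda$ via the covering machinery of Proposition~\ref{pro:covered}. By Proposition~\ref{ThmQualificationPower} (together with the remark that follows it for the case $\theta=1$), the power function $\lambda^p$ is a qualification of FAR for every $p\in(0,1]$ and every $\theta\in(0,2)$; in particular $\psi(\lambda)=\lambda$ is such a qualification. By the first sentence of Proposition~\ref{pro:covered} it then suffices to show that $\varphi_\mu$ is covered by $\psi$, and by the ``in particular'' clause of that proposition this in turn reduces to verifying concavity of $\varphi_\mu$ on some interval $(0,\bar\lambda]$.

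The concavity check is a short computation. Writing $L:=-\log\lambda$, one has $\varphi_\mu(\lambda)=L^{-\mu}$, and differentiating twice (using $dL/d\lambda=-1/\lambda$) gives
$$\varphi_\mu''(\lambda)=\frac{\mu\, L^{-\mu-2}}{\lambda^{2}}\bigl[(\mu+1)-L\bigr],$$
which is nonpositive exactly on $\lambda\in(0,e^{-\mu-1}]$. This matches precisely the interval on which the explicit logarithmic form of $\varphi_\mu$ is prescribed in \eqref{logarithmicQualification}, so no additional hypothesis on the extension of $\varphi_\mu$ to larger $\lambda$ is needed. On the compact tail $[e^{-\mu-1},\|A\|^{2}]$ the quotient $\lambda/\varphi_\mu(\lambda)$ is uniformly bounded below by a positive constant (whichever index-function extension is chosen), while $\alpha/\varphi_\mu(\alpha)\to 0$ as $\alpha\to 0$, so the covering inequality of Definition~\ref{def:covered} is automatic once $\bar\alpha\le e^{-\mu-1}$ is taken small enough.

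Combining these pieces, Proposition~\ref{pro:covered} yields that $\varphi_\mu$ is covered by $\psi(\lambda)=\lambda$ and is therefore itself a qualification of FAR, with a qualification constant of the form $\gamma=\gamma(1,\theta)/\underline c = C_\theta/\underline c$ inherited from Proposition~\ref{ThmQualificationPower}. I do not anticipate any genuine obstacle. The only subtlety is that the constant $C_\theta$ from Lemma~\ref{MittagLefflerIneqAll} blows up as $\theta\nearrow 2$, but it is finite for every $\theta\in(0,2)$, so the qualification constant is admissible for each fixed $\theta$ (although not uniform in $\theta$ near $2$); this is the only place in the argument where the restriction $\theta<2$ enters, through the boundedness of $r_\alpha$ used in Proposition~\ref{ThmQualificationPower}.
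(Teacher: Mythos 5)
Your proposal is correct and follows essentially the same route as the paper: reduce to the fact that $\psi(\lambda)=\lambda$ is a qualification of FAR (Proposition~\ref{ThmQualificationPower}, plus the remark for $\theta=1$) and then invoke the concavity clause of Proposition~\ref{pro:covered} to conclude that $\varphi_\mu$ is covered by $\psi$, hence a qualification. The only difference is that you verify the concavity of $\varphi_\mu$ on $(0,e^{-\mu-1}]$ by an explicit second-derivative computation (which is correct) where the paper simply asserts it, so there is nothing to add.
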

\begin{proof}
For all $\theta\in(0,2)$ the function $\psi(\lambda)=\lambda$ is a qualification of the FAR method, as shown in Proposition~\ref{ThmQualificationPower}.
Moreover, for arbitrary $\mu>0$ the index function $\varphi_\mu(\lambda)$ is concave for all $0<\lambda\le e^{-\mu-1}$, hence due to Proposition~\ref{pro:covered} covered by $\psi$ and consequently
also a qualification of the FAR method. This proves the proposition.
\end{proof}

Based on the above proposition, the following theorem holds as a consequence of the discussions in Example~\ref{ex:ex2} (cf.~formula \eqref{eq:estlogarithmic}).

\begin{theorem}\label{ThmPriori2}
Let $x^\delta(t)$ denote the solution of \eqref{FractionalFlow} with noisy data $y^\delta$ and noise level $\delta>0$. Then, under the assumption $x^\dagger \in \mathcal{R}(\varphi_\mu(A^*A))$ (cf.~(\ref{logarithmicCondition})) for arbitrary $\mu>0$, we have
for the FAR method and all $0 < \theta<2$ the convergence rate
\begin{equation}\label{ErrorEstimatePriori3}
\| x^\delta(T_*) - x^\dagger \| =  \mathcal{O} \left( \log^{-\mu}\left(\delta^{-1} \right) \right)  \quad \mbox{as} \quad \delta \to 0
\end{equation}
whenever the terminating time $T_*$ of FAR is chosen according to $$T_* (\delta) \sim \delta^{-\kappa/\theta},$$ for some $0<\kappa<2$,
\end{theorem}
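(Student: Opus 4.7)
My plan is to combine Proposition \ref{ThmQualification2}, which guarantees that the logarithmic index function $\varphi_\mu$ is a qualification of FAR, with the general error decomposition \eqref{errorBounds2} established in Section~\ref{sec:2}. The whole argument is essentially an instantiation of the abstract rate result \eqref{eq:estlogarithmic} from Example~\ref{ex:ex2}, with the correspondence $\alpha = t^{-\theta}$ built into FAR via Theorem~\ref{FARRegu}.

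First I would set $\alpha := T_*^{-\theta}$, so that the a~priori choice $T_*(\delta) \sim \delta^{-\kappa/\theta}$ translates to $\alpha \sim \delta^{\kappa}$ with $0<\kappa<2$. By Proposition~\ref{ThmQualification2}, $\varphi_\mu$ is a qualification of FAR, hence there exists $\gamma>0$ (independent of $\alpha$) such that for all sufficiently small $\alpha$,
\begin{equation*}
\|r_\alpha(A^*A) x^\dagger\| \;=\; \|r_\alpha(A^*A)\,\varphi_\mu(A^*A)\,v\| \;\le\; \gamma\,\varphi_\mu(\alpha)\,\rho.
\end{equation*}
Inserting this profile bound and the data-propagation bound from condition (iii) of Definition~\ref{gAlpha} (with $\gamma_* = C_\theta/2$ from Theorem~\ref{FARRegu}) into \eqref{errorBounds2} yields
\begin{equation*}
\|x^\delta(T_*) - x^\dagger\| \;\le\; \gamma\,\rho\,\varphi_\mu(\alpha) \;+\; \gamma_*\,\delta/\sqrt{\alpha}.
\end{equation*}

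Next I would compare the two terms on the right-hand side against the target rate $\log^{-\mu}(\delta^{-1})$. For the bias term, since $\alpha \sim \delta^\kappa$ and $\varphi_\mu(\lambda)=\log^{-\mu}(1/\lambda)$ for small $\lambda$, we get
\begin{equation*}
\varphi_\mu(\alpha) \;\sim\; \log^{-\mu}(\delta^{-\kappa}) \;=\; \kappa^{-\mu}\,\log^{-\mu}(\delta^{-1}),
\end{equation*}
which is exactly of the desired order. For the propagated data-noise term, $\delta/\sqrt{\alpha} \sim \delta^{1-\kappa/2}$, and the assumption $\kappa<2$ ensures $1-\kappa/2>0$, so this term decays polynomially in $\delta$. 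Since any positive power of $\delta$ dominates every inverse power of $\log(\delta^{-1})$ as $\delta\to 0$ (the limit identity noted at the end of Example~\ref{ex:ex2}), the noise term is $o(\log^{-\mu}(\delta^{-1}))$ and is absorbed into the bias contribution, giving the claimed rate \eqref{ErrorEstimatePriori3}.

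There is no real obstacle here: the single nontrivial input is that $\varphi_\mu$ genuinely qualifies as a qualification for FAR, and that has already been secured in Proposition~\ref{ThmQualification2} via covering by $\psi(\lambda)=\lambda$ (Proposition~\ref{pro:covered}) together with Proposition~\ref{ThmQualificationPower}. The only points requiring a little care are the restriction to sufficiently small $\alpha$ (so that $\alpha\le e^{-\mu-1}$ and the logarithmic formula for $\varphi_\mu$ applies), which is automatic as $\delta\to 0$, and the verification that the condition $0<\kappa<2$ is both necessary and sufficient for the noise term to be negligible—a lower bound $\kappa>0$ is needed so that $\alpha\to 0$ (regularization), and the upper bound $\kappa<2$ controls the variance contribution.
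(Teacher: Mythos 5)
Your proposal is correct and follows essentially the same route as the paper: the paper's own proof simply invokes Proposition~\ref{ThmQualification2} together with the generic estimate of Example~\ref{ex:ex2} (formula \eqref{eq:estlogarithmic}), which is exactly the decomposition $\|x^\delta(T_*)-x^\dagger\|\le \gamma\rho\,\varphi_\mu(\alpha)+\gamma_*\delta/\sqrt{\alpha}$ with $\alpha=T_*^{-\theta}\sim\delta^\kappa$ that you spell out. The only quibble is your closing remark that $0<\kappa<2$ is ``necessary''; the theorem only requires sufficiency, which your argument establishes.
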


Now, consider the a posteriori choice of regularization parameter. We start with the following lemma.

\begin{lemma}\label{IneqResidualLog}
The index function $\sqrt{\lambda}\varphi_{\mu}(\lambda)$ is a qualification of FAR, where $\varphi_{\mu}$ is defined in \eqref{logarithmicQualification}. Consequently, a constant $c_3$ exists such that
\begin{equation*}
\sup_{\lambda\in(0,e^{-2\mu-1})} \sqrt{\lambda} |r_{\alpha}(\lambda)| \varphi_\mu(\lambda) \leq c_3 \sqrt{\alpha}  \log^{-\mu}\left(\alpha^{-1} \right).
\end{equation*}
\end{lemma}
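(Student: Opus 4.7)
The plan is to reduce the lemma to Proposition~\ref{pro:covered} by showing that the composed index function $\phi(\lambda):=\sqrt{\lambda}\,\varphi_\mu(\lambda)$ is covered by the benchmark qualification $\psi(\lambda)=\lambda$ of FAR, which was already established in Proposition~\ref{ThmQualificationPower} for every $\theta\in(0,2)$.

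First I would verify that $\phi$ is a legitimate index function on some interval $(0,\bar\lambda]$. On $(0,e^{-\mu-1}]$ we have $\phi(\lambda)=\sqrt{\lambda}\log^{-\mu}(1/\lambda)$, which is continuous, tends to $0$ as $\lambda\to 0+$, and a short logarithmic-derivative computation
\[
\frac{\phi'(\lambda)}{\phi(\lambda)}=\frac{1}{2\lambda}+\frac{\mu}{\lambda\log(1/\lambda)}>0
\]
shows strict monotonicity; outside that interval one simply extends $\varphi_\mu$, and hence $\phi$, to an index function on $(0,\|A\|^2]$ exactly as in \eqref{logarithmicQualification}.

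Next comes the key step: applying Proposition~\ref{pro:covered}. I would form the quotient
\[
\frac{\psi(\lambda)}{\phi(\lambda)}=\frac{\lambda}{\sqrt{\lambda}\,\varphi_\mu(\lambda)}=\sqrt{\lambda}\,\log^{\mu}(1/\lambda),
\]
and differentiate logarithmically to obtain
\[
\frac{d}{d\lambda}\log\!\left(\frac{\psi(\lambda)}{\phi(\lambda)}\right)=\frac{1}{\lambda}\left(\frac{1}{2}-\frac{\mu}{\log(1/\lambda)}\right),
\]
which is strictly positive as soon as $\log(1/\lambda)>2\mu$, i.e.\ for all $\lambda<e^{-2\mu}$. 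Thus $\psi/\phi$ is increasing on some $(0,\bar\lambda]$ with $\bar\lambda\le e^{-2\mu}$, and Proposition~\ref{pro:covered} yields that $\phi$ is covered by $\psi$. Because $\psi$ is a qualification of the FAR method (the case $p=1$ of Proposition~\ref{ThmQualificationPower}), the same proposition transfers the qualification property to $\phi$: there exist a constant $c_3>0$ and $\bar\alpha>0$ with
\[
\sup_{\lambda\in(0,\|A\|^2]}|r_\alpha(\lambda)|\,\sqrt{\lambda}\,\varphi_\mu(\lambda)\le c_3\,\sqrt{\alpha}\,\varphi_\mu(\alpha)\qquad(0<\alpha\le\bar\alpha).
\]

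Finally, for $\alpha\le e^{-\mu-1}$ we have $\varphi_\mu(\alpha)=\log^{-\mu}(1/\alpha)$, so restricting the supremum to $\lambda\in(0,e^{-2\mu-1})$ delivers the stated inequality with the constant $c_3$. The only non-routine point is the calculus check that the nested-logarithm quotient $\sqrt{\lambda}\log^{\mu}(1/\lambda)$ is increasing for small $\lambda$; once that is in hand, everything else is a direct invocation of previously stated results.
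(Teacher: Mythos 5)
Your argument is correct, but it follows a different route than the paper. You prove that the composite function $\phi(\lambda)=\sqrt{\lambda}\,\varphi_\mu(\lambda)$ is itself covered by the benchmark $\psi(\lambda)=\lambda$, by checking via the logarithmic derivative that the quotient $\psi/\phi=\sqrt{\lambda}\log^{\mu}(1/\lambda)$ is increasing for $\lambda<e^{-2\mu}$ (your computation is correct), and then you invoke Proposition~\ref{pro:covered} together with the qualification $\psi(\lambda)=\lambda$ from Proposition~\ref{ThmQualificationPower}; restricting to $\alpha\le e^{-\mu-1}$ turns $\varphi_\mu(\alpha)$ into $\log^{-\mu}(\alpha^{-1})$ and gives the claim. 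The paper instead avoids any calculus by a product-splitting trick: it squares the quantity and estimates
\begin{equation*}
\sup_{\lambda\in(0,e^{-2\mu-1})}\bigl(\sqrt{\lambda}\,r_{\alpha}(\lambda)\varphi_\mu(\lambda)\bigr)^2
\le \sup_{\lambda\in(0,e^{-2\mu-1})}\lambda\, r_{\alpha}(\lambda)\;\cdot\;\sup_{\lambda\in(0,e^{-2\mu-1})} r_{\alpha}(\lambda)\varphi_{2\mu}(\lambda),
\end{equation*}
using that $\varphi_\mu^2=\varphi_{2\mu}$ on this interval, and then applies the two qualifications already established (the power $\lambda$ from Proposition~\ref{ThmQualificationPower} and $\varphi_{2\mu}$ from Proposition~\ref{ThmQualification2}) to bound the two factors by $\gamma\alpha$ and $\gamma\,\varphi_{2\mu}(\alpha)$, respectively, after which a square root yields $c_3\sqrt{\alpha}\log^{-\mu}(\alpha^{-1})$. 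What your route buys is a direct verification that $\sqrt{\lambda}\,\varphi_\mu(\lambda)$ is a qualification in the full sense of Definition~\ref{QualificationDef} (i.e.\ with $\sqrt{\alpha}\,\varphi_\mu(\alpha)$ on the right-hand side for all $\lambda\in(0,\|A\|^2]$), which is actually the first assertion of the lemma; what the paper's route buys is brevity, since it recycles the two previously proven qualifications without any monotonicity computation, at the price of only delivering the displayed inequality on the restricted interval. Both arguments are sound and rest on the same two pillars, the saturation-level qualification $\lambda$ and the logarithmic scale.
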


 \proof 

The assertion of the lemma follows from Proposition~\ref{ThmQualification2} in combination with the fact that $\varphi(\lambda)=\lambda$ is a qualification of FAR for all $\theta\in(0,2)$, and with the following inequality
\begin{equation*}
\sup\limits_{\lambda\in(0,e^{-2\mu-1})} \left( \sqrt{\lambda} r_{\alpha}(\lambda) \varphi_\mu(\lambda) \right)^2
\leq \sup\limits_{\lambda\in(0,e^{-2\mu-1})} \lambda r_{\alpha}(\lambda) \sup\limits_{\lambda\in(0,e^{-2\mu-1})} r_{\alpha}(\lambda) \varphi_{2\mu}(\lambda).
\end{equation*}
 \proofend 


\begin{theorem}\label{ThmPosterioriLog}
Under the assumptions of Theorem~\ref{ThmPriori2}, but for the terminating time $T_*=T_*(\delta,y^\delta)$ of FAR chosen as a solution of $\chi(T)=0$ (cf.~\ref{fractionalDP})), we have
\begin{equation}\label{ErrorEstimatePosterTLog}
T_* = \mathcal{O} \left( W_{2\mu/\theta} \left( \delta^{-2/\theta} \log^{-2/\theta}(\delta^{-1}) \right) \right)  \quad \mbox{as} \quad \delta \to 0,
\end{equation}
where $W_{a}(z)$ is the unique solution of $\zeta \log^{a}(\zeta) = z$ with respect to $\zeta$, and moreover the convergence rate
\begin{equation}\label{ErrorEstimatePosterLog}
\| x^\delta(T_*) - x^\dagger \| = \mathcal{O} \left(  \log^{-\mu}(\delta^{-1}) \right)  \quad \mbox{as} \quad \delta \to 0.
\end{equation}
\end{theorem}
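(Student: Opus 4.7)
The overall strategy parallels the H\"older case of Theorem~\ref{ThmPosteriori}: exploit the stopping identity $\chi(T_*)=0$ to sandwich $\|A\,r_{T_*^{-\theta}}(A^*A)\,x^\dagger\|$ between $(\tau-3)\delta$ and a quantity controlled by the source condition through Lemma~\ref{IneqResidualLog}, and then combine with the usual triangle split into approximation and propagation errors. Because logarithmic source conditions do not admit Tautenhahn-style interpolation as cleanly as H\"older conditions, the qualification of $\sqrt{\lambda}\,\varphi_\mu(\lambda)$ proved in Lemma~\ref{IneqResidualLog} is the indispensable substitute for the inequality \eqref{PosterioriProofIneq1}.

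\emph{Bound on $T_*$.} Arguing as in \eqref{PosterioriProofIneq2Log} and using the uniform estimate $|r_{T_*^{-\theta}}(\lambda)|\le 3$ from \eqref{EIneqRBounded}, the stopping identity $\chi(T_*)=0$ gives
\begin{equation*}
(\tau-3)\,\delta \;\le\; \|A\,r_{T_*^{-\theta}}(A^*A)\,x^\dagger\|.
\end{equation*}
Inserting $x^\dagger=\varphi_\mu(A^*A)v$ with $\|v\|\le\rho$ and applying Lemma~\ref{IneqResidualLog} with $\alpha=T_*^{-\theta}$ bounds the right-hand side from above by a constant multiple of $\rho\,T_*^{-\theta/2}\log^{-\mu}(T_*^\theta)$. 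Combining these two inequalities yields
\begin{equation*}
T_*^{\theta/2}\,\log^{\mu}(T_*^\theta) \;\le\; C\,\delta^{-1},
\end{equation*}
and raising to the $2/\theta$ power rewrites this as $T_*\,\log^{2\mu/\theta}(T_*)\le C'\delta^{-2/\theta}$, which by the very definition of the transcendental inverse $W_{2\mu/\theta}$ (absorbing a secondary logarithmic factor into the argument) delivers \eqref{ErrorEstimatePosterTLog}.

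\emph{Error rate.} I use the triangle split together with condition (iii) of Definition~\ref{gAlpha},
\begin{equation*}
\|x^\delta(T_*)-x^\dagger\| \;\le\; \|r_{T_*^{-\theta}}(A^*A)\,x^\dagger\| + \gamma_*\,T_*^{\theta/2}\,\delta.
\end{equation*}
The first summand is $\mathcal{O}(\varphi_\mu(T_*^{-\theta}))=\mathcal{O}(\log^{-\mu}(T_*^\theta))$ by Proposition~\ref{ThmQualification2}. The second summand is controlled by reusing the inequality from the previous stage: multiplying $(\tau-3)\delta\le c_3\rho\,T_*^{-\theta/2}\log^{-\mu}(T_*^\theta)$ by $T_*^{\theta/2}$ shows that $T_*^{\theta/2}\delta$ is itself $\mathcal{O}(\log^{-\mu}(T_*^\theta))$. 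Finally, the asymptotic $W_a(z)\sim z/\log^a(z)$ as $z\to\infty$, applied to the bound \eqref{ErrorEstimatePosterTLog}, gives $\log(T_*^\theta)\asymp \log(\delta^{-1})$ as $\delta\to 0$, converting both estimates into $\mathcal{O}(\log^{-\mu}(\delta^{-1}))$ and proving \eqref{ErrorEstimatePosterLog}.

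\emph{Expected difficulty.} The delicate point is the last step: extracting the asymptotic $\log T_*^\theta\asymp \log\delta^{-1}$ from the implicit transcendental inequality governing $T_*$, so that the approximation-error term $\log^{-\mu}(T_*^\theta)$ can be expressed directly in terms of the noise level. Everything else reduces either to Lemma~\ref{IneqResidualLog} (for the qualification bound) or to the Morozov identity itself, which conveniently forces the propagation error and the approximation error to share the \emph{same} logarithmic order and hence bypasses any need for a logarithmic analogue of the H\"older interpolation used in \eqref{PosterioriProofIneq1}.
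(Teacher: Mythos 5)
Your first step reproduces the paper's chain $(\tau-3)\delta\le\|A r_{T_*^{-\theta}}(A^*A)x^\dagger\|\le c_3\rho\, T_*^{-\theta/2}\log^{-\mu}(T_*^{\theta})$ from \eqref{PosterioriProofIneq2Log} and Lemma~\ref{IneqResidualLog}, but what this yields is only $T_*\log^{2\mu/\theta}(T_*)\le C\,\delta^{-2/\theta}$, i.e.\ $T_*\le W_{2\mu/\theta}(C\delta^{-2/\theta})$. The theorem (and the paper's inequality \eqref{PfPosterTIneqOriginal}) asserts the strictly stronger bound in which the argument of $W_{2\mu/\theta}$ carries the extra factor $\log^{-2/\theta}(\delta^{-1})$; since $W_a$ is increasing and that argument is smaller, "absorbing a secondary logarithmic factor into the argument" goes in the wrong direction — you cannot absorb a factor you have not produced. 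Producing it would require the additional estimate $T_*^{\theta/2}\log^{\mu+1}(T_*)\le C\delta^{-1}$ (equivalently $\log T_*\ge c\log\delta^{-1}$ fed back into your inequality), which your derivation does not contain; so \eqref{ErrorEstimatePosterTLog}, and likewise the sharpened bound \eqref{PfPosterTIneq} the paper extracts from it, is not established by your argument.

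The more serious gap is the last step of the rate proof. From an \emph{upper} bound on $T_*$ you can only conclude $\log T_*\le C\log\delta^{-1}$, hence $\log^{-\mu}(T_*^{\theta})\ge c\log^{-\mu}(\delta^{-1})$ — the wrong direction for the approximation error. Converting $\mathcal{O}(\log^{-\mu}(T_*^{\theta}))$ into $\mathcal{O}(\log^{-\mu}(\delta^{-1}))$ needs a \emph{lower} bound $T_*\ge c\,\delta^{-\epsilon}$, which Morozov's principle does not supply for free: $T_*$ may stop early, and for $\theta\in(1,2)$ the bias function $E_\theta(-\lambda T^{\theta})$ oscillates, so the lower estimate \eqref{EIneq} that might rescue such an argument for $\theta\in(0,1)$ is unavailable. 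This is exactly the job done in the H\"older case by the interpolation inequality \eqref{PosterioriProofIneq1}, and your announced "bypass" of its logarithmic analogue is precisely where the proof breaks; the sound route is to bound $\|r_{T_*^{-\theta}}(A^*A)x^\dagger\|$ directly from $\|A r_{T_*^{-\theta}}(A^*A)x^\dagger\|\le(\tau+3)\delta$ together with $x^\dagger=\varphi_\mu(A^*A)v$ via a Math\'e--Pereverzev-type interpolation for the index function $\varphi_\mu$, which gives $\mathcal{O}(\log^{-\mu}(\delta^{-1}))$ with no lower bound on $T_*$ at all. (The paper's own write-up is admittedly terse here — it asserts \eqref{PfPosterTIneqOriginal} without a displayed derivation and compresses the final estimate into one sentence — but your version makes the missing ingredient explicit and then waves it away.) By contrast, your treatment of the propagation term is sound and can even be made independent of the problematic conversion: distinguishing $T_*\le\delta^{-1/\theta}$ from $T_*>\delta^{-1/\theta}$ in $T_*^{\theta/2}\log^{\mu}(T_*^{\theta})\le C\delta^{-1}$ gives $T_*^{\theta/2}\delta=\mathcal{O}(\log^{-\mu}(\delta^{-1}))$ directly, which is the role \eqref{PfPosterTIneq} plays in the paper's argument via \eqref{errorBounds2}.
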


 \proof 
Without loss of generality we assume that $\delta<e^{-2\mu-1}$. By combining inequality \eqref{PosterioriProofIneq2Log} and Lemma \ref{IneqResidualLog} with $\alpha$ replaced by $T^{-\theta}_*$, we obtain
\begin{equation*}
\begin{array}{ll}
(\tau -3)\delta & \leq \| A r_{T^{-\theta}_*}(A^*A) x^\dagger\| \leq c_3 T^{-\theta/2}_*  \log^{-\mu}\left( T^{\theta}_* \right) \\ & \leq c_3 \theta^{-\mu/2} T^{-\theta/2}_*  \log^{-\mu}\left( T_* \right).
\end{array}
\end{equation*}
Therefore, for $\delta\in(0,e^{-1})$, and if we set $c_4:=c_3 \theta^{-\mu/2} (\tau-3)^{-1}$, we have
\begin{eqnarray}\label{PfPosterTIneqOriginal}
T_* \log^{2\mu/\theta}(T_*) \leq c^{2/\theta}_4 \delta^{-2/\theta} \log^{-2/\theta}(\delta^{-1}),
\end{eqnarray}
which yields the estimate \eqref{ErrorEstimatePosterTLog}, as well as the inequality
\begin{equation}\label{PfPosterTIneq}
\sqrt{T^\theta_*}\leq c_4  \delta^{-1} \log^{-1-\mu}(\delta^{-1}).
\end{equation}

The estimate \eqref{ErrorEstimatePosterLog} can be driven according to inequalities \eqref{errorBounds2}, \eqref{PfPosterTIneq}, and to the Proposition \ref{ThmQualification2}.
 \proofend 

\begin{remark}
(a) To the best of our knowledge, under the logarithmic source condition, for obtaining the optimal convergence rate \eqref{ErrorEstimatePosterLog}, existing regularization methods with a posteriori regularization parameter selection methods require the time cost $T_{old} = \mathcal{O} \left( \delta^{-2} \log^{-2-2\mu}(\delta^{-1}) \right)$. However, in the case of the FAR method, according to \eqref{PfPosterTIneqOriginal} we derive for $\theta>1$ that
\begin{equation*}\label{AccelerationIneq}
T_* \ll T_* \log^{2\mu/\theta}(T_*) \leq c^{2/\theta}_4   \delta^{-2/\theta} \log^{-2/\theta}(\delta^{-1}) \ll T_{old}
\end{equation*}
for sufficient small $\delta>0$.
This means that FAR with $1<\theta<2$ yields a super accelerated optimal regularization method. (b) Similar to Theorem \ref{FARTikhonov}, the converse result for FAR under logarithmic source conditions can be established by the existing technique, proposed in e.g. \cite{Scherzer2016}.
\end{remark}

\section{Numerical investigation of the FAR method}
\label{NumericalScheme}

\setcounter{section}{5}
\setcounter{equation}{0}\setcounter{theorem}{0}

Roughly speaking, the fractional differential equation \eqref{FractionalFlow} with appropriate numerical discretization schemes for the artificial time variable and stopping rule of iteration steps yields a concrete iterative regularization method. Rather than performing a rigorous numerical analysis, we provide in this section a numerical example to demonstrate if the regularization property and the acceleration effect of the FAR carry over to its numerical realization. To this end, we consider the one-step Adams-Moulton method \cite{Diethelm1999,Diethelm2004}, namely,
\begin{equation}\label{AdamsScheme}
\left\{\begin{array}{ll}
x^{\delta,P}_{k+1} = \frac{1}{\Gamma(\theta)} \sum^k_{j=0} b_{j,k+1} A^* (y^\delta - A x^\delta_j), \\
x^{\delta}_{k+1} = \frac{1}{\Gamma(\theta)} \left( a_{k+1,k+1} A^* (y^\delta - A x^{\delta,P}_{k+1})  + \sum^k_{j=0} a_{j,k+1} A^* (y^\delta - A x^\delta_j) \right),
\end{array}\right.
\end{equation}
where $b_{j,k+1} = \frac{\Delta t^\theta}{\theta} \left[ (k-j+1)^{\theta} - (k-j)^{\theta} \right]$, and $a_{j,k+1} = \frac{\Delta t^\theta}{\theta(\theta+1)}\cdot d_{j,k+1}$,
\begin{equation*}
d_{j,k+1} = \left\{\begin{array}{ll}
\left[ k^{\theta+1} - (k-\theta)(k+1)^\theta \right] , \quad j=0, \\
\left[ (k-j+2)^{\theta+1} + (k-j)^{\theta+1} - 2 (k-j+1)^{\theta+1} \right] ,~ 1 \leq j \leq k \\
1, \quad j=k+1.
\end{array}\right.
\end{equation*}

\begin{remark}
As the Landweber iterates, the iterates $x^{\delta}_{k+1}$ of (\ref{AdamsScheme}) obviously belong to the Krylov subspace $\textrm{Span}\left\{ A^* y^\delta, ..., (A^*A)^{k-1} A^* y^\delta \right\}$. Therefore, the solution $x^{\delta}_{k+1}$ of (\ref{AdamsScheme}) can be written as $x^{\delta}_{k}=g_{k}(A^*A)A^* y^\delta$, where $g_{k}$ is a polynomial of degree $k$.
For proving the regularization property of (\ref{AdamsScheme}), one has to check that $g_{k}(\lambda)$ fulfills all three conditions in Definition \ref{gAlpha}. Moreover, in order to prove the acceleration effect of \ref{gAlpha} under source conditions (\ref{HolderSourceIntro}), one must show that for all $\lambda\in(0,\|A\|^2]$: $|r_{k}(\lambda)| (k\lambda)^p \to0$ as $k\to\infty$, where $r_{k}(\lambda)= 1- \lambda g_{k}(\lambda)$ represents the residual polynomial. However, all of these issues are not addressed here since it is out of the scope of our current paper. Rather we refer to a similar result in \cite{GongHofmannZhang2019} for the second order asymptotical regularization.
\end{remark}

Now, by employing the newly developed iterative regularization method (\ref{AdamsScheme}), we present some numerical results for the following integral equation
\begin{equation}\label{IntegralEq}
Ax(s):= \int^1_0 K(s,t) x(t) dt = y(s), \quad K(s,t)=\left\{\begin{array}{ll}
s(1-t), \quad s\leq t, \\
t(1-s), \quad s> t.
\end{array}\right.
\end{equation}
In this context, we choose $\mathcal{X}=\mathcal{Y}:=L^2[0,1]$ such that the operator $A$ is compact, selfadjoint and injective. Then the operator equation $Ax = y$ can be rewritten as   $x=-y''$ provided that
 $y\in H^2[0,1]\cap H^1_0[0,1]$. Moreover, the operator $A$ has the singular system (eigensystem) $\{\sigma_j;u_j;u_j\}_{j=1}^\infty$ with $\sigma_j=(j\pi)^{-1}$ and $u_j(t)=\sqrt{2}\sin(j\pi t)$. Furthermore, using the interpolation theory (see, e.g.,~\cite{Lions-1972}) it is not difficult to show that for $4p-1/2 \not\in \mathbb{N}$
\begin{eqnarray*}
\mathcal{R}((A^*A)^{p}) = \left\{ x\in H^{4p}[0,1]:~x^{2l}(0)=x^{2l}(1)=0,~l=0,1,...,\lfloor 2p-1/4 \rfloor \right\}.
\end{eqnarray*}

In our simulations, problem \eqref{IntegralEq} is numerically solved by the linear finite element method. Let $\mathcal{Y}_n$ be the finite element space of piecewise linear functions on a uniform grid with step size $1/(n-1)$. Denote by $P_n$ the orthogonal projection operator acting from $\mathcal{Y}$ into $\mathcal{Y}_n$. Define $A_n:= P_n A$ and $\mathcal{X}_n:= A^*_n \mathcal{Y}_n$. Let $\{\phi_j\}^n_{j=1}$ be a basis of $\mathcal{Y}_n$, then, instead of \eqref{IntegralEq}, we solve a system of linear equations $A_n x_n = y_n$ in practice, where $[A_n]_{ij}= \int^1_0 \left(  \int^1_0 k(s,t) \phi_i(s) ds \right) \phi_j(t) dt$ and $[y_n]_{j} = \int^1_0 y(t) \phi_j(t) dt$.


We consider the following two different right-hand sides for \eqref{IntegralEq}.

\begin{example}\label{ex1}
In this example let $y(s):=s(1-s)$. Then the solution is $x^\dagger \equiv 2$ and we have $x^\dagger\in \mathcal{R}((A^*A)^{p})$ for all $p<1/8$.
\end{example}

\begin{example}\label{ex2}
In this example let $y(s):=s^4(1-s)^3$. Then the solution attains the form  $x^\dagger=-6t^2(1-t)(2-8t+7t^2)$, and we have $x^\dagger\in \mathcal{R}((A^*A)^{p})$ for all $p<5/8$.
\end{example}

Both examples are tested by using the grid size $n=100$. Other parameters are: $\Delta t=19.4850, x_0=\dot{x}_0=0, \tau =3.1$. Uniformly distributed noises with the magnitude $\delta'$ are added to the discretized exact right-hand side:
\begin{equation*}\label{Data}
[y^\delta_n]_j := \left[ 1 + \delta' \cdot(2 \textrm{Rand}(x) -1) \right] \cdot [y_n]_j, \quad j=1, ..., n,
\end{equation*}
where $\textrm{Rand}(x)$ returns a pseudo-random value drawn from a uniform distribution on [0,1]. The noise level of measurement data is calculated by $\delta=\|y^\delta_n - y_n\|_2$, where $\|\cdot\|_2$ denotes the standard vector norm in $\mathbb{R}^n$. The iteration of (\ref{AdamsScheme}) is terminated according to the discrepancy principle ($\tau=3$ for all examples), i.e.
\begin{equation*}\label{discrepancy2}
\|y^\delta-Ax^{k^*}_n\|_{L^2[0,1]} \leq \tau \delta < \|y^\delta-Ax^{k}_n\|_{L^2[0,1]}, \quad 0\leq k < k^*.
\end{equation*}
Finally, to assess the accuracy of the approximate solutions, we define the $L^2$-norm relative error for an approximate solution $x^{k^*}_n$: $\textrm{L2Err}:= \|x^{k^*}_n - x^\dagger\|_{L^2[0,1]}/\|x^\dagger\|_{L^2[0,1]}$, where $x^\dagger$ is the exact solution to the corresponding model problem.

\begin{table}[!t]
\caption{Comparisons with the Landweber method, Nesterov's method, Chebyshev method, and CGNE method. }
\begin{center}
\begin{tabular}{|c|c|c|c|c|c|c|c|} \hline
\multirow{2}{*}{$\delta$} &
\multicolumn{2}{c|}{$\theta=0.5$} &
\multicolumn{2}{c|}{$\theta=0.8$} &
\multicolumn{2}{c|}{Landweber} \\
\cline{2-7}
& $k^*(\delta)$ & \textrm{L2Err} & $k^*(\delta)$ & \textrm{L2Err} & $k^*(\delta)$ & \textrm{L2Err} \\ \hline
\multicolumn{7}{|c|}{\textbf{Example 1: $y(s)=s(1-s)$}}  \\  \hline
0.01 & 4712 &  0.2694 & 1961 &  0.2684 & 712 &  0.2639 \\
0.001 & 9336 &  0.1494 & 7057 &  0.1484 & 5997 &  0.1439 \\
0.0001 & $k_{max}$ & 0.2597 &  $k_{max}$ & 0.2073  & $k_{max}$ &  0.1807 \\ \hline
\multicolumn{7}{|c|}{\textbf{Example 2: $y(s)=s^4(1-s)^3$}}  \\  \hline
0.001 & 3916 & 0.3038 & 2585 & 0.2918  & 1347 & 0.2833 \\
0.0001 & 8534 & 0.2003 & 4947 & 0.1995  & 2034 & 0.1878 \\
0.00001 & $k_{max}$ & 0.3195 & $k_{max}$ & 0.2184  & $k_{max}$ & 0.1509 \\ \hline \hline
\multirow{2}{*}{$\delta$} &
\multicolumn{2}{c|}{$\theta=1.2$} &
\multicolumn{2}{c|}{$\theta=1.5$} &
\multicolumn{2}{c|}{$\theta=1.8$} \\
\cline{2-7}
& $k^*(\delta)$ & \textrm{L2Err} & $k^*(\delta)$ & \textrm{L2Err} & $k^*(\delta)$ & \textrm{L2Err} \\ \hline
\multicolumn{7}{|c|}{\textbf{Example 1: $y(s)=s(1-s)$}}  \\  \hline
0.01 & 136 &  0.2476 & 29 &  0.2187 & 11 &  0.2204 \\
0.001 & 733 & 0.1407 & 97 &  0.1321  & 45 &  0.1325 \\
0.0001 & 8716 & 0.0997 &  1286 & 0.0673  & 657 &  0.0764 \\ \hline
\multicolumn{7}{|c|}{\textbf{Example 2: $y(s)=s^4(1-s)^3$}}  \\  \hline
0.001 & 224 & 0.2520 & 90 & 0.2019  & 58 & 0.2382 \\
0.0001 & 883 & 0.0901 & 217 & 0.0767  & 155 & 0.0704 \\
0.00001 & 4582 & 0.0287 & 328 & 0.0067  & 243 & 0.0083 \\ \hline \hline
\multirow{2}{*}{$\delta$} &
\multicolumn{2}{c|}{Nesterov} &
\multicolumn{2}{c|}{Chebyshev} &
\multicolumn{2}{c|}{CGNE} \\
\cline{2-7}
& $k^*(\delta)$ & \textrm{L2Err} & $k^*(\delta)$ & \textrm{L2Err} & $k^*(\delta)$ & \textrm{L2Err} \\ \hline
\multicolumn{7}{|c|}{\textbf{Example 1: $y(s)=s(1-s)$}}  \\  \hline
0.01 & 109 & 0.2590 & 127 &  0.2553  & 6 &  0.2213 \\
0.001 & 704 &  0.1600 & 556 &  0.1496  & 19 &  0.1783  \\
0.0001 & 4160 & 0.1025 &  4361 &  0.0897  & 38 & 0.0894 \\ \hline
\multicolumn{7}{|c|}{\textbf{Example 2: $y(s)=s^4(1-s)^3$}}  \\  \hline
0.001 & 102 & 0.2501 & 75 & 0.2434  & 6 & 0.2919  \\
0.0001 & 208 & 0.1176 & 207 & 0.0990  & 13 & 0.1814 \\
0.00001 & 1805 &  0.0280 & 2226 & 0.0196  & 15 & 0.0847 \\ \hline
\end{tabular}\label{tab:Comparison}
\end{center}
\end{table}

The results of the simulation are presented in Table \ref{tab:Comparison}, where we can conclude that, in general, the FAR method with $\theta\in(1,2)$ needs fewer iterations and offers more accurate regularized solutions. Concerning the number of iterations, the conjugate gradient method for the normal equation (CGNE, cf., e.g.~\cite{Hanke1995}) performed significantly more effectively than all of other methods. However, the accuracy of the CGNE method is considerably worse than other accelerated regularization methods, since the step size of CGNE is too large to capture the optimal point and the semi-convergence effect disturbs the iteration rather early.  Note that we set a maximal iteration number $k_{max}=200,000$ in all of our simulations.

\section{Conclusion and outlook}
\label{Conclusion}

In this paper, we have investigated the Fractional Asymptotical Regularization method (FAR) for solving linear ill-posed operator equations $Ax=y$ with compact forward operators $A$ mapping between infinite dimensional Hilbert spaces. Instead of exact right-hand side $y$, we are given noisy data $y^\delta$ obeying the deterministic noise model $\|y^\delta - y\|\leq \delta$. We have proven that under both H\"{o}lder-type and logarithmic source conditions, FAR with fractional order $\theta\in(0,2)$ exhibits an optimal regularization method. Moreover, if $\theta\in(1,2)$, FAR yields an accelerated method, namely, the optimal convergence rates can be obtained with approximately the $\theta$ root of iterations than needed for ordinary Landweber iteration. Finally, with the help of the one-step Adams-Moulton method, a new iterative regularization algorithm has been introduced.

As has been shown by this manuscript, fractional calculus can play some role for regularization schemes aimed at the stable approximate solution to general linear ill-posed problems. However, to the best of our knowledge, the literature in this direction is quite limited. The initial results in this work might be a bridge between fractional calculus and regularization theory. Of course, there are many remaining interesting problems in this topic. For instance, since the Hilbert scale can be understood as preconditioning for iterative regularization methods \cite{EggerNeubauer2005}, it seems to be necessary to derive further assertions on acceleration of the FAR method in Hilbert scales. Certainly, the optimal choice of fractional order $\theta$ and extensions to nonlinear problems are of interest. Yet, in the linear setting, new questions also arise, for example the study of the damped system
\begin{equation}\label{ConEq}
\left( ^C D^\theta_{0+} x^\delta \right) (t) + \eta \dot{x}^\delta(t)  + A^*A x^\delta(t) = A^* y^\delta,
\end{equation}
where $\theta>1$, and $\eta>0$ is the damping parameter, which may or may not depend on the artificial time $t$. A remaining natural question is whether the damping term in \eqref{ConEq} can yield a further acceleration of FAR.

\section*{Acknowledgements}

We express our gratitude to the anonymous referees whose valuable comments and suggestions allowed us to eliminate weak points of the manuscript and thus to improve the paper.

The work of Y. Zhang is supported by the Alexander von Humboldt Foundation through a postdoctoral researcher fellowship, and the work of B. Hofmann is supported by the German Research Foundation (DFG-grant HO 1454/12-1).




 \bigskip \smallskip

\parindent0em

 \it

$^{1,2}$ Faculty of Mathematics,
Chemnitz University of Technology \\
Reichenhainer Str. 39/41,
09107 Chemnitz, Germany  \\[2pt]

$^1$ School of Science and Technology,
\"{O}rebro University\\
Fakultetsgatan 1,
\"{O}rebro -- 70182, Sweden  \\[2pt]

$^{1}$  e-mail: ye.zhang@mathematik.tu-chemnitz.de \\
$^{2}$  e-mail: hofmannb@mathematik.tu-chemnitz.de \\[4pt]


\vspace{0.5cm}

This paper is now published (in revised form) in \\
Fract. Calc. Appl. Anal. Vol. 22, No 3 (2019), pp. 699-721, \\ DOI: 10.1515/fca-2019-0039, \\
and is available online at http://www.degruyter.com/view/j/fca , so always cite it with the journal's coordinates.


\begin{thebibliography}{99}
 \normalsize







\bibitem{Scherzer2016}
V. Albani, P. Elbau, M. V. {D}e Hoop and O. Scherzer.
Optimal convergence rates results for linear inverse problems in Hilbert spaces.
\emph{Numer. Funct. Anal. Optim.} \textbf{37}, No 5 (2016), 521--540. 



\bibitem{Diethelm1999}
K.~Diethelm and A.~D.~Freed.
The {F}rac{PECE} subroutine for the numerical solution of differential equations of fractional order.
In \emph{S.~Heinzel and T.~Plesser} (eds.): Forschung und
  wissenschaftliches Rechnen: Beitr\"age zum Heinz-Billing Preis 1998, Ges.~f.~wiss.~Datenverarbeitung, G\"ottingen (1999), 57--71.



\bibitem{Diethelm2004}
K.~Diethelm, N.J. Ford and A.D. Freed.
Detailed error analysis for a fractional {A}dams method.
\emph{Numer. Aglorithms}. \textbf{36}, No 1  (2004), 31--52. 



\bibitem{EggerNeubauer2005}
H.~Egger and A.~Neubauer.
Preconditioning {L}andweber iteration in {H}ilbert scales.
\emph{Numer. Math.} \textbf{101}, No 4 (2005), 643--662. 

\bibitem{engl1996regularization}
H.~Engl, M.~Hanke and A.~Neubauer.
\emph{Regularization of inverse problems}.
Kluwer, Dordrecht (1996).

\bibitem{FHM11}
J.~Flemming, B.~Hofmann and P.~Math\'e.
Sharp converse results for the regularization error using distance functions.
\emph{Inverse Problems} \textbf{27}, No 2 (2011), 025006 (18pp).

\bibitem{GongHofmannZhang2019}
R. Gong, B.~Hofmann and Y. Zhang.
A new class of accelerated regularization methods, with application to bioluminescence tomography.
\emph{arXiv:1903.05972} (2019).

\bibitem{Gorenflo2014}
R. Gorenflo, A.A. Kilbas, F.~Mainardi and S.V. Rogosin.
\emph{Mittag-{L}effler Functions, Related Topics and Applications}.
Springer-Verlag, Heidelberg (2014).

\bibitem{Groetsch2007}
 C.W. Groetsch.
 Integral equations of the first kind, inverse problems and regularization: a crash course.
\emph{J. Phys.: Conf. Ser.} \textbf{73} (2007), 012001. 

\bibitem{Hansen2010}
 P.C. Hansen.
 \emph{Discrete Inverse Problems: Insight and Algorithms}.
 SIAM, Philadelphia (2010).

\bibitem{Hanke1995}
 M. Hanke.
 \emph{Conjugate gradient type methods for ill-posed problems}.
 Wiley, New York (1995).


\bibitem{Hofmann-2007}
B.~Hofmann and P.~Math\'e.
Analysis of profile functions for general linear regularization methods.
\emph{SIAM J. Numer. Anal.} \textbf{45}, No 3  (2007), 1122--1141. 

\bibitem{Hohage2000}
T.~Hohage.
Regularization of exponentially ill-posed problems.
\emph{Numer. Funct. Anal. and Optimiz.} \textbf{21}, No 3-4 (2000), 439--464. 

\bibitem{KianYamamoto2017}
Y. Kian and M. Yamamoto.
On existence and uniqueness of solutions for semilinear fractional wave equations.
\emph{Fract. Calc. Appl. Anal.} \textbf{20}, No 1 (2017), 117--138.

\bibitem{Kilbas2006}
A.A. Kilbas, H.M. Srivastava and J.J. Trujillo.
\emph{Theory and Applications of Fractional Differential Equations. North-Holland Mathematics Studies, vol. 204.}
Elsevier, Amsterdam (2006).




\bibitem{Lions-1972}
J. Lions and E. Magenes.
\emph{Non-Homogeneous Boundary Value Problems and Applications, Volumes I}
Springer, Berlin (1972).

\bibitem{LiLuchkoYamamoto2014}
Z. Li, Y. Luchko and M. Yamamoto.
Asymptotic estimates of solutions to initial-boundary value problems for distributed order time-fractional diffusion equations.
\emph{Fract. Calc. Appl. Anal.} \textbf{17}, No 4 (2014), 1114--1136.


\bibitem{LiuRundellYamamoto2016}
Y. Liu, W. Rundell and M. Yamamoto.
Strong maximum principle for fractional diffusion equations and an application to an inverse source problem.
\emph{Fract. Calc. Appl. Anal.} \textbf{19}, No 4 (2016), 888--906.


\bibitem{LuchkoYamamoto2016}
Y. Luchko and M. Yamamoto.
General time-fractional diffusion equation: some uniqueness and existence results for the initial-boundary-value problems.
\emph{Fract. Calc. Appl. Anal.} \textbf{19}, No 3 (2016), 676--695.


\bibitem{Mainardi2000}
F. Mainardi and R. Gorenflo.
On Mittag-{L}effler-type functions in fractional evolution processes.
\emph{J. Comput. Appl. Math.} \textbf{118} (2000), 283--299. 


\bibitem{Mathe-2003}
P. Math\'e and S.V. Pereverzev.
Geometry of linear ill-posed problems in variable {H}ilbert scales.
\emph{Inverse Problems} \textbf{19}, No 3 (2003), 789--803. 


\bibitem{Neubauer-2017}
A.~Neubauer.
On {N}esterov acceleration for {L}andweber iteration of linear ill-posed problems.
\emph{J. Inverse Ill-Posed Probl.} \textbf{25}, No 3 (2017), 381--390. 

\bibitem{Podlubny1999}
I. Podlubny.
\emph{Fractional Differential Equations.}
Academic Press, San Diego (1999).



\bibitem{Rieder-2005}
A. Rieder.
{R}unge-{K}utta integrators yield optimal regularization schemes.
\emph{Inverse Problems.} \textbf{21} (2005), 453--471. 

\bibitem{Simon2014}
T. Simon.
Comparing {F}r\'echet and positive stable laws.
\emph{Elec. J. Probab.} \textbf{19}, No 16 (2014), 1--25. 



\bibitem{Tautenhahn-1994}
U. Tautenhahn.
On the asymptotical regularization of nonlinear ill-posed problems.
\emph{Inverse Problems.} \textbf{10} (1994), 1405--1418. 



\bibitem{Vainikko1986}
G. Vainikko and A. Veretennikov.
\emph{Iteration Procedures in Ill-Posed Problems (In Russian)}.
Nauka, Moscow (1986).


\bibitem{ZhaHof2018AA}
Y.~Zhang and B.~Hofmann.
On the second-order asymptotical regularization of linear ill-posed inverse problems.
\emph{Appl.~Anal.}, DOI:10.1080/00036811.2018.1517412 (2018).


\end{thebibliography}
\end{document}